\setlist[enumerate]{label=(\roman*),font=\upshape}
\def\mojiparline#1{
    \newcounter{mpl}
    \setcounter{mpl}{#1}
    \@tempdima=\linewidth
    \advance\@tempdima by-\value{mpl}zw
    \addtocounter{mpl}{-1}
    \divide\@tempdima by \value{mpl}
    \advance\kanjiskip by\@tempdima
    \advance\parindent by\@tempdima
}
\title[On a classification of irreducible periodic diffeomorphisms]{On a classification of irreducible periodic diffeomorphisms on surfaces which commute with certain involution}
\author{Norihisa Takahashi}
\address{Department of Mathematical Sciences,\\ Colleges of Science and Engineering,\\ Ritsumeikan University,\\ Nojihigashi 1-1-1, Kusatsu, Shiga, 525-8577, Japan}
\email{ntakaha@fc.ritsumei.ac.jp}
\author{Hiraku Nozawa}
\address{Department of Mathematical Sciences,\\ Colleges of Science and Engineering,\\ Ritsumeikan University,\\ Nojihigashi 1-1-1, Kusatsu, Shiga, 525-8577, Japan}
\thanks{The second author is supported by JSPS KAKENHI Grant Numbers 17K14195 and 20K03620.}
\email{hnozawa@fc.ritsumei.ac.jp}
\keywords{mapping class group, periodic maps, orbifold}
\subjclass[2020]{57K20, 57M60, 57M99, 20F65, 20F05}
\date{}
\newtheorem{thm}{Theorem}[section]
\newtheorem{cor}[thm]{Corollary}
\newtheorem{prop}[thm]{Proposition}
\newtheorem*{thm:}{Main Theorem}
\newtheorem{lem}[thm]{Lemma}
\theoremstyle{definition}
\newtheorem{ex}[thm]{Example}
\theoremstyle{remark}
\newtheorem{rem}[thm]{Remark}
\newcommand{\bb}{\mathbb}
\newcommand{\opn}{\operatorname}
\newcommand{\ZZ}{\mathbb{Z}}
\definecolor{darkgreen}{cmyk}{1,0,1,.2}
\definecolor{darkorchid}{rgb}{0.6, 0.2, 0.8}
\definecolor{persimmon}{rgb}{0.93, 0.35, 0.0}
\newdimen\theight
\def\TeXref#1{%
             \leavevmode\vadjust{\setbox0=\hbox{{\tt
                     \quad\quad  {\small \textrm #1}}}%
             \theight=\ht0
             \advance\theight by \lineskip
             \kern -\theight \vbox to
             \theight{\rightline{\rlap{\box0}}%
             \vss}%
             }}%
\begin{document}
\begin{abstract}
    Ishizaka classified up to conjugacy hyperelliptic periodic automorphisms of a surface.
    Here, an involution $I$ on a surface $\Sigma_{g}$ is hyperelliptic if and only if $\Sigma_{g}/\langle I \rangle$ is homeomorphic to $S^2$.
    In this article, we give a classification up to conjugacy for irreducible periodic automorphisms of a surface $\Sigma_{g}$ which commute with involutions $\iota$ such that $\Sigma_{g}/\langle \iota \rangle$ is homeomorphic to $T^{2}$.
\end{abstract}

\maketitle

\section{Introduction}

    Let $\Sigma_{g}$ of genus $g>1$ be a connected oriented closed surface.
    The \emph{mapping class group} $\operatorname{Mod}(\Sigma_g)$ of $\Sigma_g$ is the group of isotopy classes of orientation-preserving diffeomorphisms of $\Sigma_g$.
    We call elements of $\operatorname{Mod}(\Sigma_g)$ \emph{automorphisms} of $\Sigma_g$.
    An involution on $\Sigma_g$ is called \emph{hyperelliptic} if it fixes $2g+2$ points.
    It is well known that such an involution is unique up to conjugacy and has the maximum number of fixed points among involutions on $\Sigma_{g}$.
    An automorphism of $\Sigma_{g}$ which commutes with a hyperelliptic involution is called \emph{hyperelliptic}.
    Ishizaka \cite{Ishizaka3,Ishizaka} classified hyperelliptic periodic automorphisms up to conjugacy and gave right-handed Dehn twist presentations based on resolutions of singularities of families of Riemann surfaces.
    An involution $I$ of $\Sigma_{g}$ is hyperelliptic if and only if the quotient space $\Sigma_{g}/\langle I \rangle$ is homeomorphic to a sphere.

    In this paper, we will study periodic automorphisms of $\Sigma_{g}$ which commute with an involution $\iota_{g}$ whose quotient space $\Sigma_{g}/\langle \iota_{g} \rangle$ is homeomorphic to a torus (see Figure \ref{fig:iotag}).
        \begin{figure}[ht]
	    	\centering
		    \includegraphics[scale=0.45]{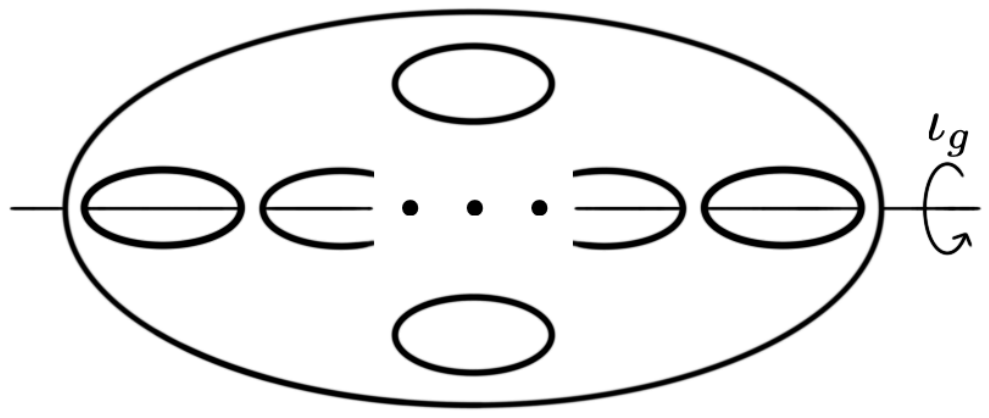}
    		\caption{An involution $\iota_{g}$ such that $\Sigma_g / \langle \iota_g \rangle$ is homeomorphic to $T^2$}
	    	\label{fig:iotag}
        \end{figure}
    There is a significant difference in the classification from the hyperelliptic case considered in this case in \cite{Ishizaka}.
    Recall that an automorphism $\varphi$ of $\Sigma_g$ is called \emph{reducible} if there is a nonempty set $\{ C_1,C_2,\dots, C_n \}$ of isotopy classes of essential simple closed curves in $\Sigma_g$ so that the geometrical intersection number of $C_i$ and $C_j$ is equal to $0$ for all $i, j \in \{ 1,2,\dots,n \}$ and so that $\{ \varphi(C_i) \mid i=1,2,\dots, n \} = \{ C_1,C_2,\dots,C_n \}$, and \emph{irreducible} if otherwise \cite[Chapter 13, Section 2]{FM}.
    Such $\{ C_1,C_2,\dots,C_n \}$ is called an invariant reduction system.
    
    By the classification in \cite{Ishizaka}, every hyperelliptic periodic automorphism of $\Sigma_{g}$ is the power of an irreducible periodic automorphism.
    In turn, as we will see below, this is not the case for periodic automorphisms which commute with an involution $\iota_{g}$ such that $\Sigma_{g}/\langle \iota_{g} \rangle$ is homeomorphic to $T^2$ (see Examples \ref{ex:redodd}, \ref{ex:redeven} and Proposition \ref{prop:redpf}).
    In this article, we focus on the classification in the irreducible case.
    Irreducible periodic automorphisms of $\Sigma_g$ have been studied from various viewpoints (see, e.g., \cite{Hirose2,Hirose-Kasahara,Ge-Ra,Dhanwani}).

    By Kerckhoff's theorem \cite[Theorem 5]{Kerckhoff}, it is known that any finite subgroup of $\operatorname{Mod}(\Sigma_g)$ can be lifted to the group of orientation-preserving diffeomorphisms of $\Sigma_g$.
    Therefore, when considering periodic automorphism of $\Sigma_g$ from now on, we will fix one representative diffeomorphism for discussion.
    In particular, note that thereafter, we will only consider the case where periodic diffeomorphisms are orientation-preserving.

    To state the result, let us present an example of an irreducible periodic diffeomorphism $h_{n, p}$ of an oriented closed surface parametrized by a positive integer $n \geq 3$ and $p \in \{ 1,\dots, n-1 \}$, which was considered also in \cite[Section 2]{PRS19}, \cite[Chapter 2]{Dh20}, \cite{Takahashi} and \cite{TN}.
    Consider a regular $2n$-gon.
    Let $\alpha_i$ and $\beta_i\ ( i = 0, \dots, n-1 )$ be the edges of the $2n$-gon as shown in Figure \ref{fig:Sig}.
    For each $i = 0,1, \dots, n-1$, equip $\alpha_i$ the clockwise orientation and $\beta_i$ the counter-clockwise orientation, respectively.
    By identifying $\alpha_i$ with $\beta_j$ so that the orientaions are compatible for every pair of $i$ and $j \in \{ 0, \dots, n-1 \}$ with $j - i \equiv p \mod n$, we obtain a surface $\Sigma$ of genus $\displaystyle g = \frac{n-\opn{gcd}(n, p) - \opn{gcd}(n, p+1) + 1}{2}$ (see Proposition \ref{prop:val}).
    We denote $h_{n,p}$ the diffeomorphism of $\Sigma$ induced from the clockwise $\frac{2\pi}{n}$-rotation of the $2n$-gon.
    To simplify the figures, both $\alpha_{0}$ and $\beta_{p}$ will be denoted by $\alpha$, and we omit the symbols of other edges in the rest of the paper.

		\begin{figure}[ht]
			\centering
			\includegraphics[width=0.5\textwidth]{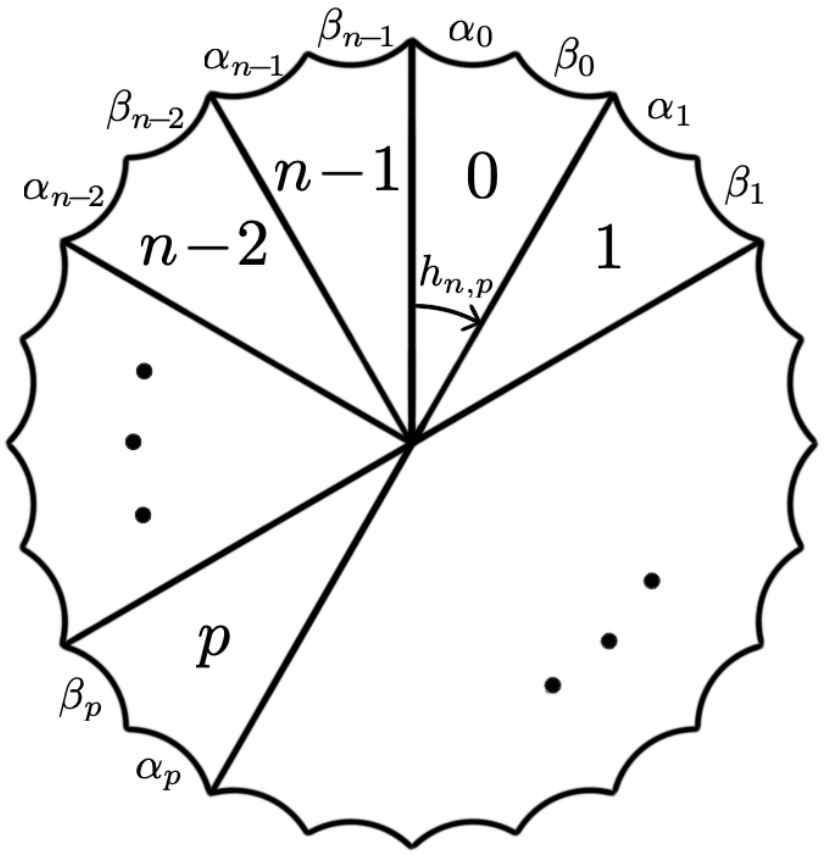}
			\caption{A diffeomorphism $h_{n,p}$}
			\label{fig:Sig}
		\end{figure}

        \begin{ex}[{\cite[Section 1]{Takahashi}}]
        \label{ex:redodd}
            We can construct an example of a reducible periodic diffeomorphism on $\Sigma_{2g+1}$ which commutes with an involution $\iota_{2g+1}$ from $h_{4g, 2g}$ and $h_{4g, 2g}^{4g-1}$ by a connected sum (Figure \ref{fig:connsum}):
            Remove small disk neighborhoods of the fixed points of $h_{4g, 2g}$ and $h_{4g, 2g}^{4g-1}$.
            Let us denote the circle boundaries $\partial_{1,1}, \partial_{1,2}, \partial_{2,1}$ and $\partial_{2,2}$ as shown in Figures \ref{fig:redoddbd1} and \ref{fig:redoddbd2}.
            For $i=1$ and $2$, we glue $\partial_{1,i}$ to $\partial_{2,i}$ (see Figure \ref{fig:connsum}).
            Then, we get a periodic diffeomorphism $F_1$ on $\Sigma_{2g+1}$ of period $4g$.
            The restriction of $F_1^{2g}$ to each of two surfaces coincides with $h_{4g,2g}^{2g}$ and $h_{4g,2g}^{2g(4g-1)}$, respectively.
            By Theorem 1 in \cite{TN}, the diffeomorphism $h_{4g,2g}$ is hyperelliptic periodic diffeomorphism and we have $\langle h_{4g,2g}, I \rangle = \langle h_{4g,2g} \rangle$ (This group is exactly the group $G_2$ of Theorem 1 in \cite{TN}).
            Since both $h_{4g,2g}^{2g}$ and $h_{4g,2g}^{2g(4g-1)}$ are hyperelliptic involutions, we can see that $F_1^{2g}$ is an involution whose quotient space is a torus.
            Since $F_1^{2g}\circ F_1 = F_1^{2g+1} = F_1 \circ F_1^{2g}$, the map $F_1^{2g}$ commutes with $F_1$.
        \end{ex}
    
        \begin{figure}[ht]
		\begin{tabular}{cc}
			\begin{minipage}{0.45\textwidth}
			    \captionsetup{width=.8\linewidth}
				\centering
				\includegraphics[width=0.9\textwidth]{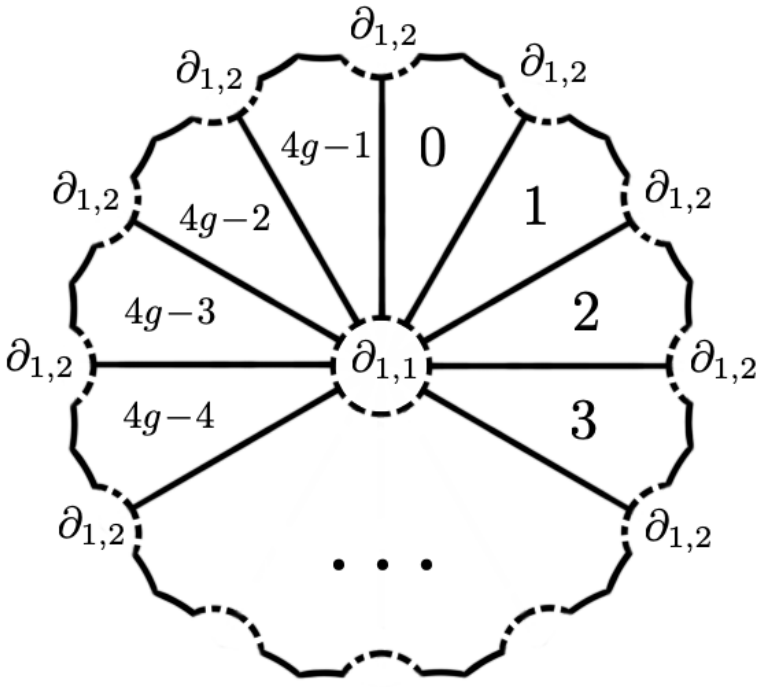}
				\caption{Boundaries $\partial_{1,1}$ and $\partial_{1,2}$}
				\label{fig:redoddbd1}
			\end{minipage}\hfill
			\begin{minipage}{0.45\textwidth}
				\centering
				\captionsetup{width=.8\linewidth}
				\includegraphics[width=0.9\textwidth]{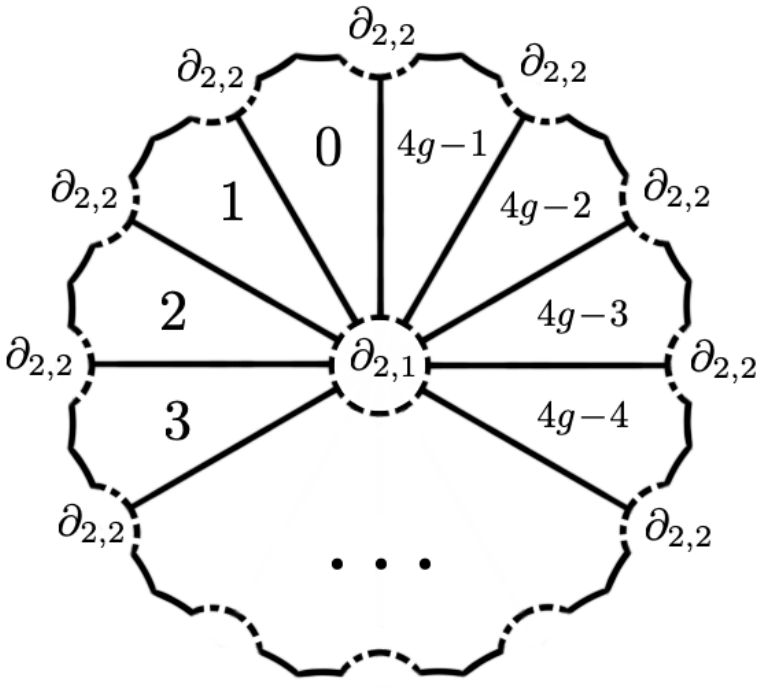}
				\caption{Boundaries $\partial_{2,1}$ and $\partial_{2,2}$}
				\label{fig:redoddbd2}
			\end{minipage}
		\end{tabular}
		\end{figure}
		
		\begin{figure}[ht]
			\centering
			\captionsetup{width=.8\linewidth}
			\includegraphics[width=0.5\textwidth]{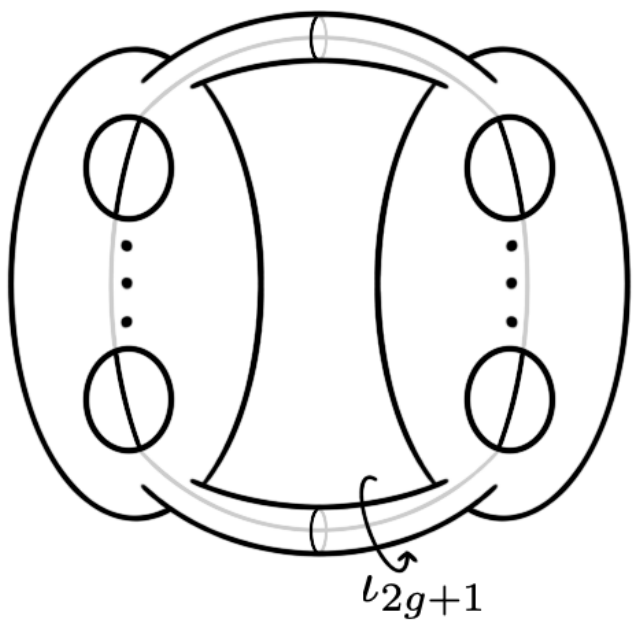}
			\caption{Glue the boundaries of the complements of disk neighborhoods of the fixed points of $(\Sigma_g, h_{4g,2g})$ and $(\Sigma_g, h_{4g,2g}^{4g-1})$ to obtain a diffeomorphism that commutes with an involution $\iota_{2g+1}$}
			\label{fig:connsum}
		\end{figure}
	
    The genus of the surface is odd in the previous example.
    In the next, we present an example with an even genus.
        
        \begin{ex}
        \label{ex:redeven}
            Remove small disk neighborhoods of the fixed points of $h_{2g+1,1}$ and $h_{2g+1,1}^{2g}$.
            We call the circle boundaries $\partial_{1,1}, \partial_{1,2},\partial_{1,3}, \partial_{2,1}, \partial_{2,2}$ and $\partial_{2,3}$ as shown in Figures \ref{fig:redevenbd1} and \ref{fig:redevenbd2}.
            Then, glue the circle boundary $\partial_{1,i}$ to $\partial_{2,i}$ for $i = 1,2,$ and $3$ (see Figure \ref{fig:redevenglue}).
            Then we get a periodic diffeomorphism $F_2$ on $\Sigma_{2g+2}$ of period $2g+1$ (see Lemma \ref{lem:h2g+1}).
            Since the diffeomorphisms $h_{2g+1,1}$ and $h_{2g+1,1}^{2g}$ are hyperelliptic (see Appendix \ref{app:A}) and the intersection of $\operatorname{Fix}(h_{2g+1,1})$ and $\operatorname{Fix}(I)$ consists of a single point (see, Proposition \ref{prop:onepoint}), we can see that hyperelliptic involutions on two surfaces induce an involution on the connected sum whose quotient space is $T^2$.
            The periodic diffeomorphism $F_2$ commutes with the involution as shown in Figure \ref{fig:redevenglue}.
        \end{ex}
        
        \begin{figure}[ht]
		\begin{tabular}{cc}
			\begin{minipage}{0.45\textwidth}
			    \captionsetup{width=0.75\linewidth}
				\centering
				\includegraphics[width=0.9\textwidth]{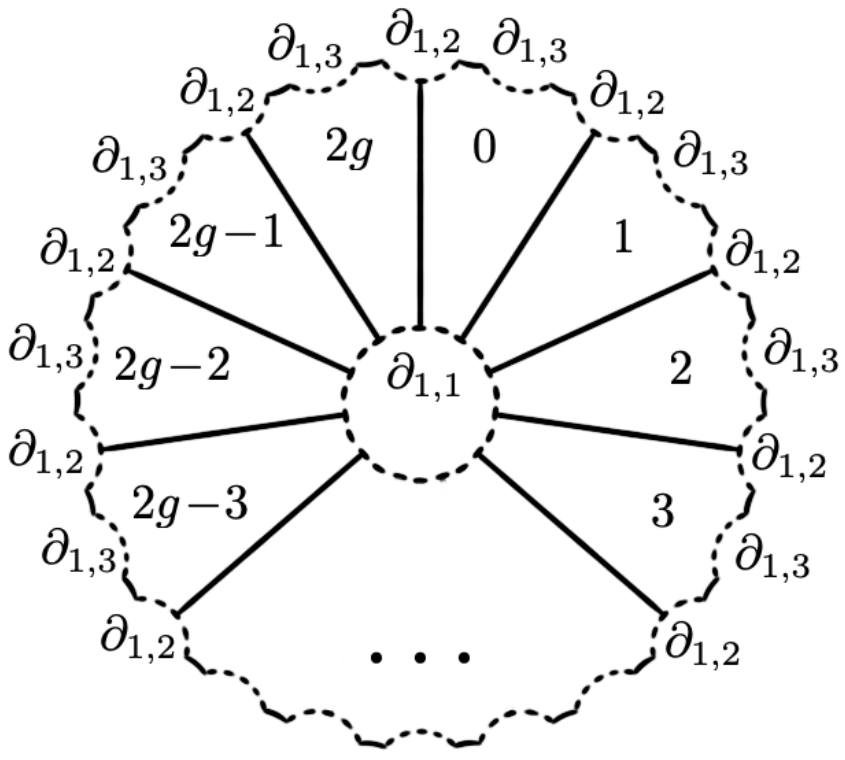}
				\caption{Boundaries $\partial_{1,1}$, $\partial_{1,2}$ and $\partial_{1,3}$}
				\label{fig:redevenbd1}
			\end{minipage}\hfill
			\begin{minipage}{0.45\textwidth}
				\centering
				\captionsetup{width=0.875\linewidth}
				\includegraphics[width=0.9\textwidth]{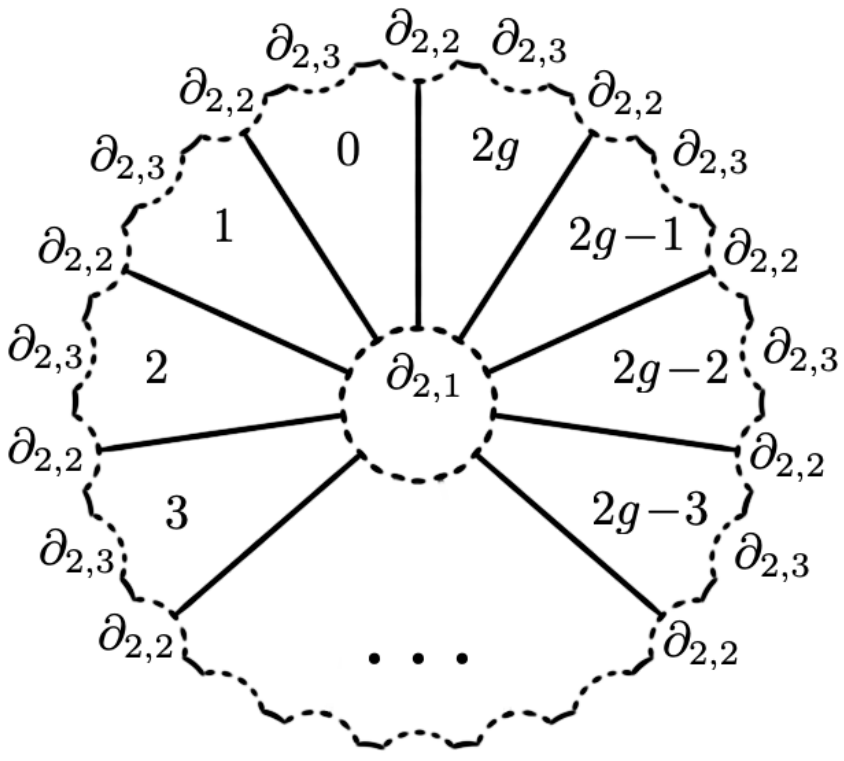}
				\caption{Boundaries $\partial_{2,1}$, $\partial_{2,2}$ and $\partial_{2,3}$}
				\label{fig:redevenbd2}
			\end{minipage}
		\end{tabular}
		\end{figure}
		
		\begin{figure}[ht]
			\centering
			\includegraphics[width=0.9\textwidth]{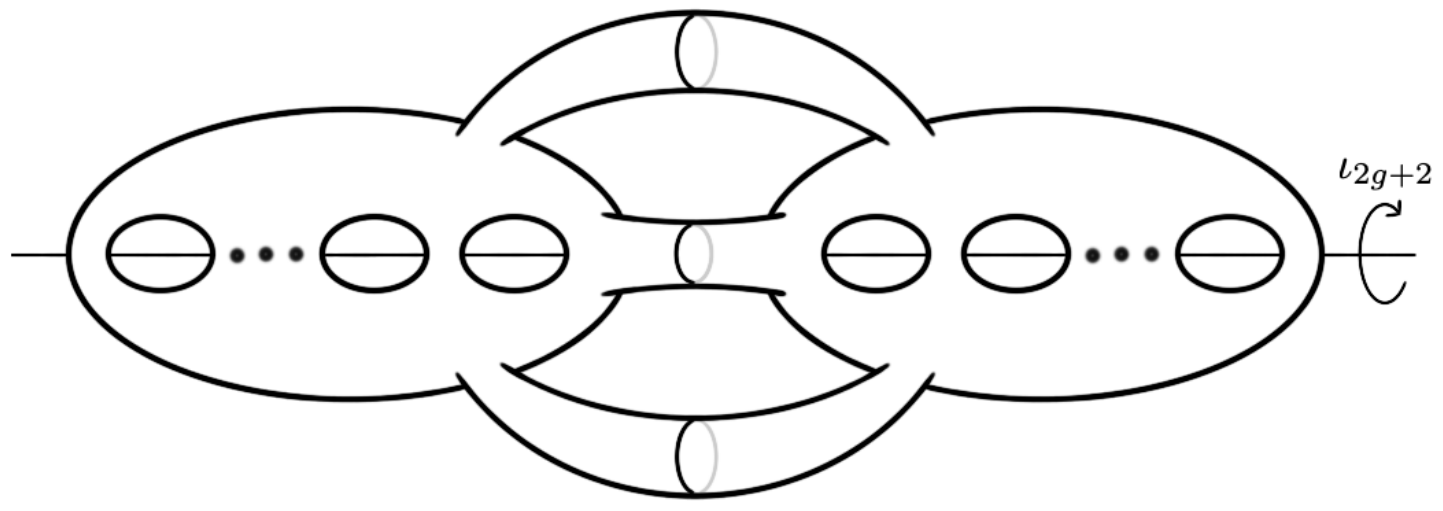}
			\caption{As in Example \ref{ex:redeven}, glue the boundaries of the complements of disk neighborhoods of the fixed points of $(\Sigma_g,h_{2g+1})$ and $(\Sigma_g, h_{2g+1}^{2g})$}
			\label{fig:redevenglue}
		\end{figure}

    Now, let $\operatorname{Diff}_{+}(\Sigma_g)$ denote the orientation-preserving diffeomorphism group of $\Sigma_g$.
    Let us recall the notion of reducibility and irreducibility of a subgroup of the diffeomorphism group of surfaces \cite{Gilman} in the case where the surface is closed.
    Recall that a partition of a closed surface is a set of mutually disjoint simple closed curves such that no two are freely homotopic and none of which is homotopic to a point.
    A subgroup $G$ in $\operatorname{Diff}_+(\Sigma_g)$ is defined to be reducible if it has an invariant partition and is irreducible otherwise.
    
    Let us explain the relation between this reducibility and the reducibility of periodic automorphism used above.
    The reducibility of a subgroup of $\operatorname{Mod}(\Sigma_g)$ is defined by the existence of an invariant reduction system on $\Sigma_g$ as in the case of periodic automorphisms.(see, \cite{Ivanov01})
    This definition of the reducibility of subgroups of $\operatorname{Mod}(\Sigma_g)$ naturally generalizes that of periodic automorphisms.
    Indeed, by definition, a periodic automorphism $\varphi$ is reducible if and only if the finite group generated by $\varphi$ is reducible.
    The definition of reducibility of finite group actions on $\Sigma_g$ is compatible with that of finite subgroups of $\operatorname{Mod}(\Sigma_g)$.
    See the last paragraph of Section \ref{sec:2} for a short argument that shows the equivalence of reducibility of a finite subgroup $G$ of $\operatorname{Diff}_+(\Sigma_g)$ and the reducibility of the image of $G$ under the projection $\operatorname{Diff}_+(\Sigma_g) \to \operatorname{Mod}(\Sigma_g)$.
    In the following, we will consider the irreducible finite subgroup of $\operatorname{Diff}_+(\Sigma_g)$.
   
    Let us state the main result of this article.
    In this result, we classify irreducible periodic diffeomorphisms which commute with $\iota_g$. 
	
		\begin{thm}
		\label{thm:irr1}
			Let $I$ and $\iota_g$ be a hyperelliptic involution and an involution of $\Sigma_g$ such that $\Sigma_g/ \langle \iota_g \rangle$ is homeomorphic to a torus (Figure \ref{fig:iotag}), respectively.
			For any periodic diffeomorphism $f$ of $\Sigma_{g}$ which commutes with an involution $\iota_g$, if the subgroup $G=\langle f, \iota_g \rangle$ of $\opn{Diff}_{+}(\Sigma_{g})$ is irreducible, then $G$ is conjugate in $\operatorname{Diff}_{+}(\Sigma_g)$ to a subgroup of one of the followings:
                \begin{enumerate}
                    \item $\langle h_{6,1}, I \rangle$ ($g=2$ and see Figures \ref{fig:Ionh61} and \ref{fig:thm3}),
                    \item $\langle h_{8,1} \rangle$ ($g=3$ and see Figure \ref{fig:thm3}),
                    \item $\langle h_{8,5} \rangle$ ($g=3$ and see Figure \ref{fig:thm3}),
                    \item $\langle h_{12,2} \rangle$ ($g=4$ and see Figure \ref{fig:thm3}),
                    \item $\langle h_{12,3} \rangle$ ($g=3$ and see Figure \ref{fig:thm3}).
                \end{enumerate}
		\end{thm}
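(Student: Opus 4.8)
The plan is to pass to the quotient orbifold and reduce the statement to a finite Diophantine enumeration governed by the Riemann--Hurwitz formula. First, since $f$ and $\iota_g$ commute and each generates a cyclic group, $G=\langle f,\iota_g\rangle$ is abelian; moreover $G/\langle\iota_g\rangle=\langle\bar f\rangle$ is cyclic, so $G$ is either cyclic (generated by $f$, with $\iota_g=f^{|f|/2}$) or isomorphic to $\bb{Z}/n\times\bb{Z}/2$ with $\langle\iota_g\rangle$ the second factor. This structural dichotomy already separates the cyclic cases (ii)--(v) from case (i).

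Next I would analyse the quotient orbifold $\mathcal{O}=\Sigma_g/G$. Because $G$ acts orientation-preservingly, $\mathcal{O}$ is a closed oriented $2$-orbifold, and for $g>1$ it is hyperbolic. A $G$-invariant system of essential simple closed curves on $\Sigma_g$ descends to an essential simple closed curve on $\mathcal{O}$ and conversely, so the irreducibility of $G$ is equivalent to $\mathcal{O}$ carrying no essential simple closed curve; among closed hyperbolic orbifolds this forces $\mathcal{O}=S^2(p,q,r)$, a sphere with exactly three cone points of orders $p,q,r$ satisfying $\tfrac1p+\tfrac1q+\tfrac1r<1$. Thus the action is encoded by a generating vector $(g_1,g_2,g_3)$ of $G$ with $g_1g_2g_3=1$ and $\opn{ord}(g_i)=p_i$, and by Nielsen's rigidity the conjugacy class of the action in $\opn{Diff}_{+}(\Sigma_g)$ is determined by this data up to $\opn{Aut}(G)$ and the mapping-class moves of the marked orbifold.

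The decisive constraint is the hypothesis that $\Sigma_g/\langle\iota_g\rangle$ is a torus, i.e.\ has underlying genus $1$. Since $G$ is abelian, $\iota_g$ fixes exactly those preimages of the $i$-th cone point whose stabiliser $\langle g_i\rangle$ contains $\iota_g$, which happens precisely when $p_i$ is even and $g_i^{p_i/2}=\iota_g$; counting the $|G|/p_i$ preimages over each such cone point gives $\#\mathrm{Fix}(\iota_g)=\sum_{i\in S}|G|/p_i$ with $S=\{\,i:\,g_i^{p_i/2}=\iota_g\,\}$. Comparing this with the value $\#\mathrm{Fix}(\iota_g)=2g-2=|G|\bigl(1-\tfrac1p-\tfrac1q-\tfrac1r\bigr)$ forced by genus $1$ through Riemann--Hurwitz yields the equation
\[
\sum_{i\in S}\frac{1}{p_i}\;=\;1-\frac1p-\frac1q-\frac1r .
\]
I would then combine this equality with the hyperbolicity $\tfrac1p+\tfrac1q+\tfrac1r<1$, with the requirement that $(g_1,g_2,g_3)$ actually generate a cyclic or $\bb{Z}/n\times\bb{Z}/2$ group of the prescribed element orders and product one, and with $g_i^{p_i/2}=\iota_g$ exactly for $i\in S$. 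Running over $|S|\in\{1,2,3\}$, each subcase becomes a bounded reciprocal equation in $(p,q,r)$ with only finitely many integer solutions, which I would tabulate.

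The hard part will be this last enumeration together with its realizability check: the Euler-characteristic equality alone admits several candidate triples $(p,q,r)$, and the real work lies in discarding those that cannot be carried by an abelian generating vector with the correct element orders, the correct placement of $\iota_g$, and the cyclic-quotient condition on $G/\langle\iota_g\rangle$. The surviving solutions must then be matched, through their rotation invariants, to the data of $h_{n,p}$ read off from the $2n$-gon, identifying them with $\langle h_{6,1},I\rangle$ and $\langle h_{n,p}\rangle$ for $(n,p)\in\{(8,1),(8,5),(12,2),(12,3)\}$ as in the corresponding figures. Since a general pair $(f,\iota_g)$ may generate only a proper subgroup, it suffices to exhibit these five as the maximal irreducible groups of the required type; Nielsen's rigidity then guarantees that each realizable datum gives a single conjugacy class, so that every such $G$ is conjugate into one of the five.
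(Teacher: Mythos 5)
Your proposal is correct in outline, but it takes a genuinely different route from the paper. The paper quotients by $\iota_g$ \emph{first}: it observes that if the induced map $\bar f$ on the torus $\Sigma_g/\langle\iota_g\rangle$ is reducible then so is $G$, classifies the irreducible periodic maps of the torus (conjugates of $h_{4,1}$, $h_{6,3}$, $h_{3,1}$ and inverses), and then, for each of these, enumerates the possible branch loci of the double cover $\Sigma_g\to\Sigma_g/\langle\iota_g\rangle$ (they must be $\bar f$-invariant, of even cardinality, and contained in the multiple orbits of $\bar f$, since branching at a free orbit forces reducibility); Nielsen's integrality condition, Riemann--Hurwitz, and a cyclicity criterion (branching at a multiple point of even index forces $G=\langle f\rangle$) then pin down the total valencies and hence the conjugacy classes. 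You instead quotient by all of $G$ at once, invoke Gilman's criterion to get $\Sigma_g/G=S^2(p,q,r)$, and convert the torus-quotient hypothesis into the counting identity $\sum_{i\in S}1/p_i=1-1/p-1/q-1/r$ via the two Riemann--Hurwitz computations $\#\opn{Fix}(\iota_g)=2g-2=|G|\bigl(1-\tfrac1p-\tfrac1q-\tfrac1r\bigr)$; this is sound (the abelianness of $G$ is exactly what makes the fixed-point count $\sum_{i\in S}|G|/p_i$ valid), the resulting Diophantine system is indeed finite, and checking it does reproduce exactly the five groups. Your method is more uniform and algebraic --- it is in fact closer in spirit to how the paper proves its Theorem 1.3 --- and it makes the finiteness of the answer transparent from the start, whereas the paper's torus-level argument produces the explicit models $h_{n,p}$ and the figures more directly.

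Two points in your sketch need to be enforced with real care, and one of them is a genuine soft spot. First, the cyclic-quotient condition $G/\langle\iota_g\rangle$ cyclic (equivalently, $\langle\iota_g\rangle$ a direct $\bb{Z}/2$ factor when $G$ is not cyclic) is not a formality: for instance $\bb{Z}/8\times\bb{Z}/2$ acts irreducibly on $\Sigma_3$ with quotient $S^2(2,8,8)$, and its involution $(4,0)$ has torus quotient, yet this group is \emph{not} of the form $\langle f,\iota_g\rangle$ precisely because $(4,0)$ is divisible by $2$; several candidate triples (e.g.\ $(2,4,8)$, $(3,6,4)$, $(4,4,8)$) survive the counting identity and are killed only by this condition, so your enumeration table must carry it throughout. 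Second, your closing appeal to ``Nielsen's rigidity'' to get one conjugacy class per realizable datum is only justified for cyclic $G$, where Theorem 2.1 (Nielsen) applies; the exceptional case $G\cong\bb{Z}/6\times\bb{Z}/2$ (item (1), $\langle h_{6,1},I\rangle$) needs a separate uniqueness argument, e.g.\ the classification of actions by generating vectors up to $\opn{Aut}(G)$ and mapping class group moves on the three-punctured sphere, or the route the paper takes via its Lemma 13 of [TN20] on hyperelliptic involutions. With those two points made precise, your argument closes.
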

		
	It is remarkable that there are only finite cases in the classification of Theorem \ref{thm:irr1}.
	It contrasts with the classification of hyperelliptic periodic classes, where we have three infinite families of conjugacy classes.
	    
	    \begin{figure}[ht]
		\begin{tabular}{cc}
			\begin{minipage}{0.45\textwidth}
				\centering
				\includegraphics[width=1\textwidth]{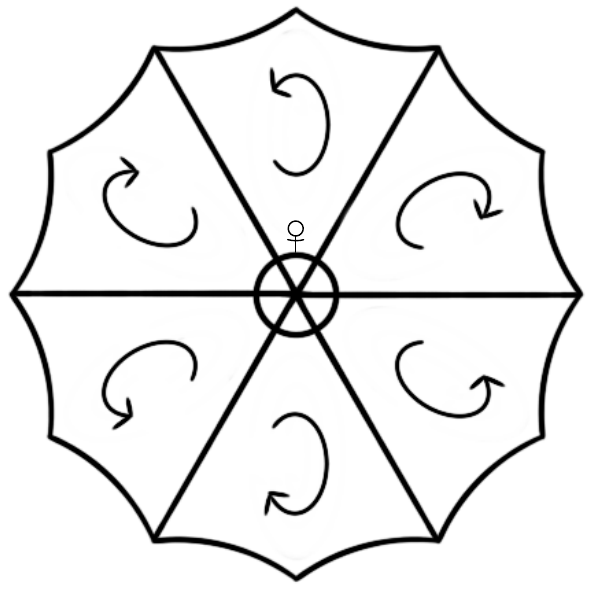}
			\end{minipage}\quad$\xrightarrow{\ I\ }$\quad
			\begin{minipage}{0.45\textwidth}
				\centering
				\includegraphics[width=1\textwidth]{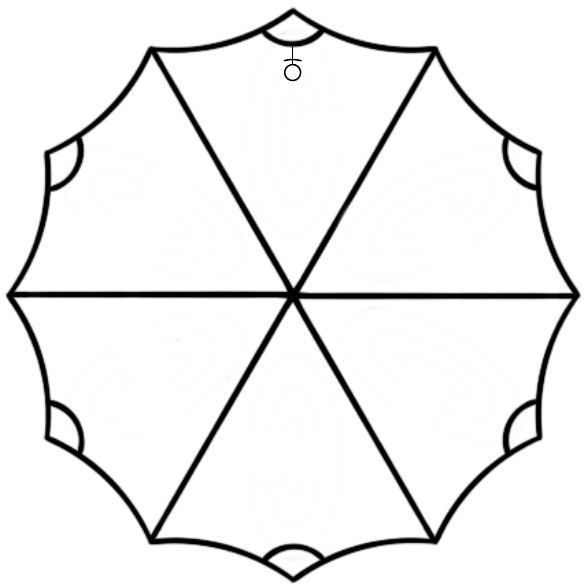}
			\end{minipage}
		\end{tabular}
		\caption{The action of $I$ on $(\Sigma_2, h_{6,1})$}
		\label{fig:Ionh61}
		\end{figure}
        
		\begin{figure}[ht]
		\begin{tabular}{ccc}
			\begin{minipage}{0.33\textwidth}
				\centering
				\includegraphics[width=1\textwidth]{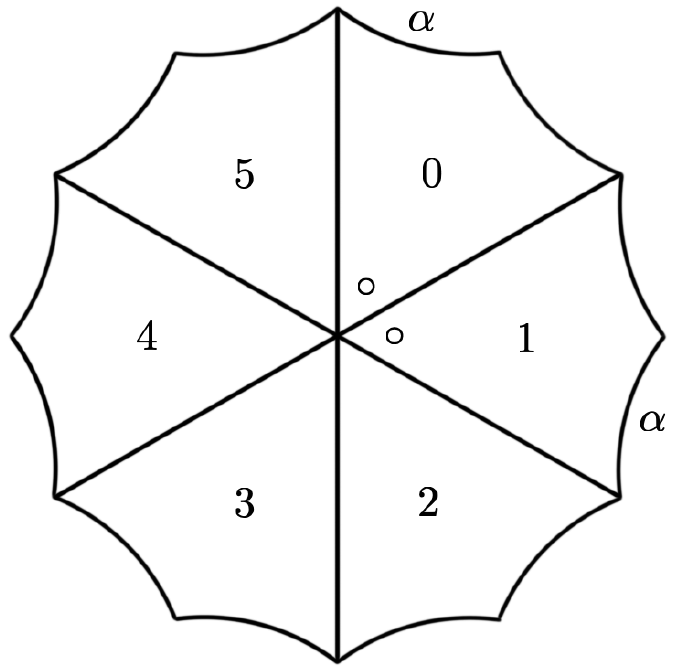}
				\caption*{$h_{6,1}$}
			\end{minipage}\hfill
			\begin{minipage}{0.33\textwidth}
				\centering
				\includegraphics[width=1\textwidth]{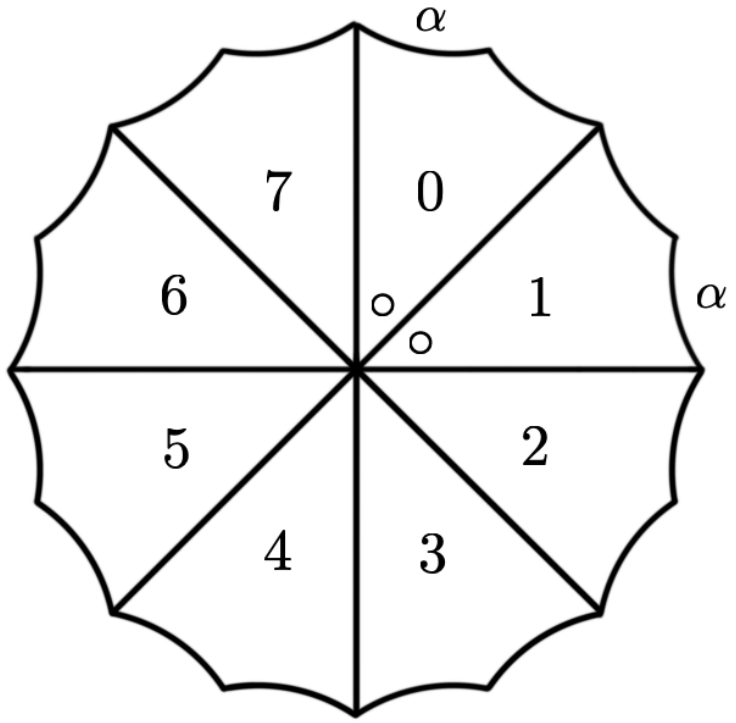}
				\caption*{$h_{8,1}$}
			\end{minipage}\hfill
			\begin{minipage}{0.33\textwidth}
				\centering
				\includegraphics[width=1\textwidth]{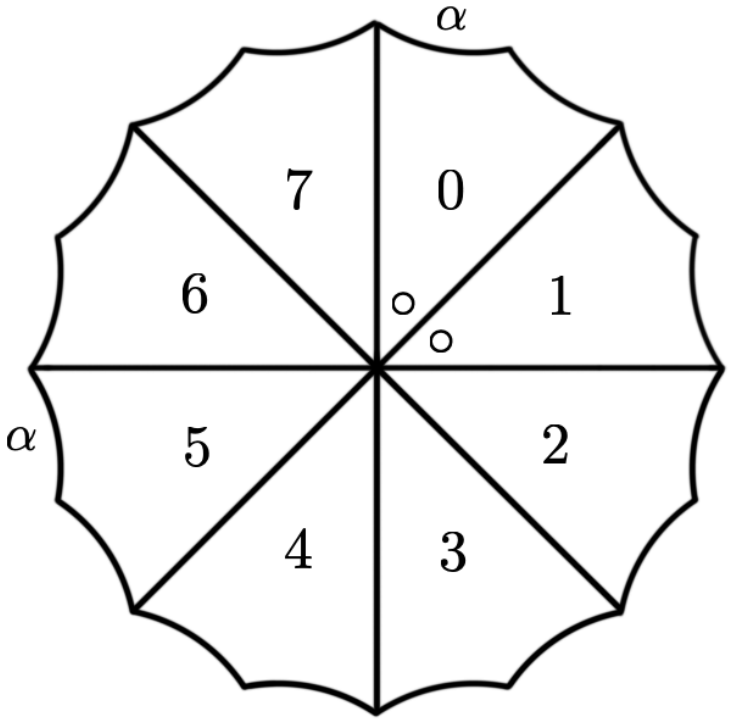}
				\caption*{$h_{8,5}$}
			\end{minipage} 
		\end{tabular}
		\begin{tabular}{cc}
			\begin{minipage}{0.33\textwidth}
				\centering
				\includegraphics[width=1\textwidth]{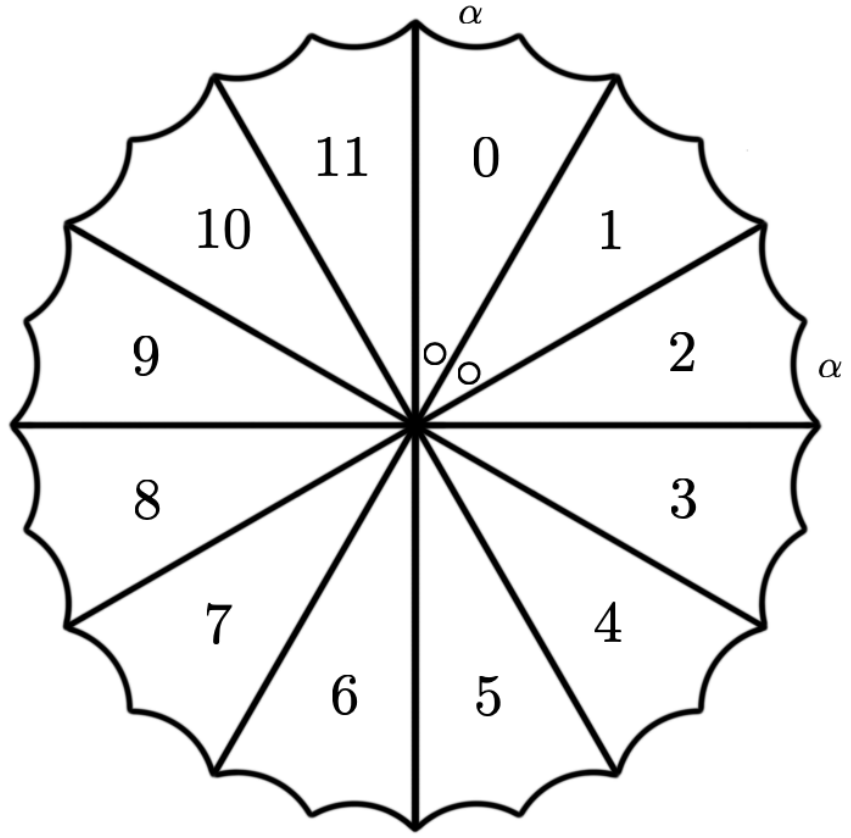}
				\caption*{$h_{12,2}$}
			\end{minipage} \hfill
			\begin{minipage}{0.33\textwidth}
				\centering
				\includegraphics[width=1\textwidth]{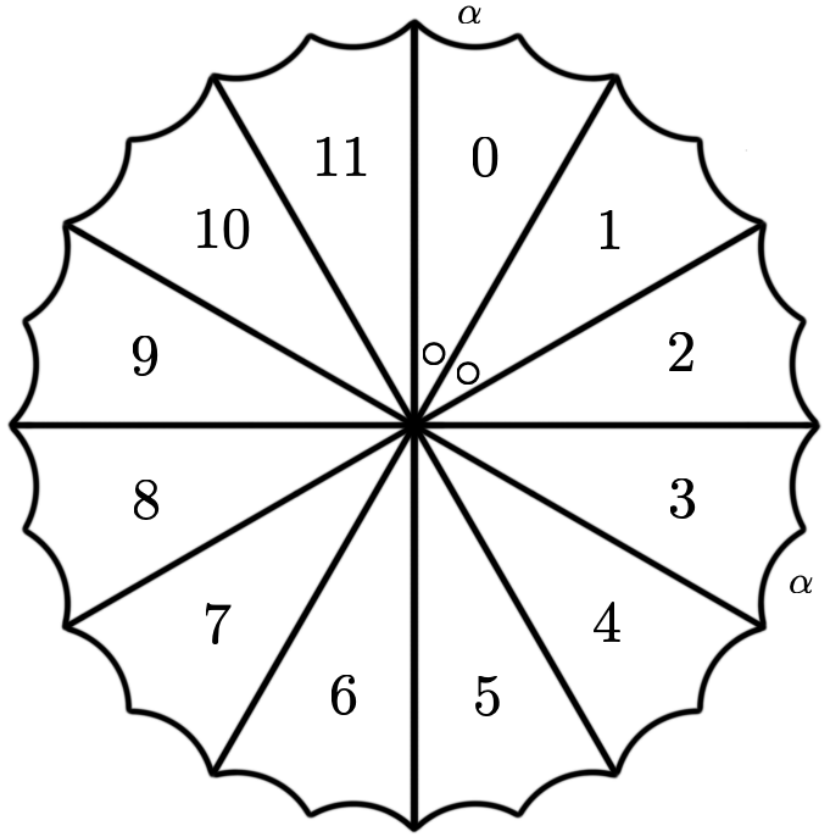}
				\caption*{$h_{12,3}$}
			\end{minipage}
		\end{tabular}
            \caption{The diffeomorphisms appearing in Theorem \ref{thm:irr1}\\ Each barycenter is a fixed point, and each small circle represents how the angles move}
            \label{fig:thm3}
		\end{figure}


\section{Periodic diffeomorphisms and their total valencies}
\label{sec:2}
    
    We first recall the classification of the conjugacy classes of periodic diffeomorphisms on $\Sigma_{g}$ in terms of the total valency introduced in \cite{Ashikaga-Ishizaka}.
    Let $f \in \opn{Diff}_+(\Sigma_{g})$ be a periodic diffeomorphism of order $n$ and $C$ the cyclic subgroup of $\opn{Diff}_{+}(\Sigma_{g})$ generated by $f$.
    A $C$-orbit $Cx = \{ f^i(x) \mid 0 \leq i \leq n-1 \}$ is called \emph{multiple} if $|Cx| < n$ and is called \emph{free} if $|Cx| = n$.
    The points in multiple orbits are \emph{multiple points} of $f$.
    Let $M_f$ be the set of all multiple points of $f$ and $p : \Sigma_g \to \Sigma_g / C$ the canonical projection.
    We call $p(M_f)$ the \emph{branch locus} of $p$ and its elements the \emph{branched points} of $p$.
    We say that $p$ is \emph{branched} at the points in $p(M_f)$.
    For a subgroup $G$ of $\operatorname{Diff}_+(\Sigma_g)$ and $x \in \Sigma_g$, $\operatorname{Stab}_{G}(x) = \{ f \in G \mid f(x) = x \}$ is called the \emph{stabilizer subgroup} of $G$ with respect to $x$.
    For $x \in \Sigma_g$, the minimum positive integer $\ell$ such that $f^{\ell}(x)=x$ is called the \emph{period} of $Cx$.
    Then, we have $\operatorname{Stab}_{C} (x) = \langle f^\ell \rangle $.
    For a multiple point $x \in M_f$ with period $\ell$, the integer $\lambda = \displaystyle \frac{n}{\ell}$ is called the \emph{branching index} of the branched point $p(x)$.
    Then, there uniquely exists $\mu \in \{1,2, \dots, \lambda - 1\}$ such that the restriction of $f^\ell$ to a small disk neighborhood of $x$ is the clockwise $\frac{2\pi\mu}{\lambda}$-rotation.
    Since the group $\langle f^\ell \rangle$ of order $\lambda$ acts effectively on the disk neighborhood of $x$ and a generator $f^\ell$ is $\frac{2\pi\mu}{\lambda}$-rotation, $\mu$ and $\lambda$ are coprime.
    Since $\mu$ and $\lambda$ are coprime, there uniquely exists an integer $\theta \in \{1,2, \dots, \lambda - 1\}$ such that $\mu \theta \equiv 1 \mod \lambda$.
    The \emph{valency} of $Cx$ is defined by $\frac{\theta}{\lambda}$.
    Let $\frac{\theta_1}{\lambda_1}, \frac{\theta_2}{\lambda_2}, \dots, \frac{\theta_s}{\lambda_s}$ be the valencies of all multiple orbits of $f$.
    The data $\left[ \, g,n \,; \frac{\theta_1}{\lambda_1} + \frac{\theta_2}{\lambda_2} + \dots + \frac{\theta_s}{\lambda_s} \, \right]$ is called the \emph{total valency} of $f$.
    
    By the following theorem, total valencies determine periodic diffeomorphisms of $\Sigma_{g}$ up to conjugacy.
    
        \begin{thm}[{Nielsen \cite[Section 11]{Nielsen}, see also \cite[Section 1.3]{Ashikaga-Ishizaka}, \cite[Theorem 2.1]{Hirose}}]
        \label{thm:Nielsen}
            Let $f, f'$ be periodic diffeomorphisms of $\Sigma_{g}$ with total valencies $\left[\,g,n \,;\, \frac{\theta_1}{\lambda_1} + \frac{\theta_2}{\lambda_2} + \dots +\frac{\theta_s}{\lambda_s} \,\right]$ and $\left[\,g,n' \,;\,\frac{\theta_1'}{\lambda_1'} + \frac{\theta_2'}{\lambda_2'} + \dots +\frac{\theta_{s'}'}{\lambda_{s'}'} \,\right]$, respectively.
            $f$ is conjugate to $f'$ if and only if the following are satisfied:
                \begin{enumerate}
                \renewcommand{\labelenumi}{(\roman{enumi})}
                    \item $s=s'$,
                    \item $n=n'$,
                    \item after changing indices, we have $\frac{\theta_i}{\lambda_i}=\frac{\theta_i'}{\lambda_i'}$ for $i =1,2,\dots,s$.
                \end{enumerate}
        \end{thm}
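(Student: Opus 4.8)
The plan is to pass to the quotient orbifold and translate the statement into a question about branched covers, in which the total valency becomes exactly the local monodromy data.

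First I would dispose of necessity. If $\phi \in \opn{Diff}_+(\Sigma_g)$ satisfies $\phi f \phi^{-1} = f'$, then $\phi$ carries $C = \langle f\rangle$-orbits to $C' = \langle f'\rangle$-orbits, so $n = n'$ and $\phi$ restricts to a bijection between the multiple orbits of $f$ and those of $f'$ preserving their periods; hence $s = s'$ and, after reindexing, $\lambda_i = \lambda_i'$. Because $\phi$ is orientation preserving and conjugates the local rotation of $f$ near a point of a multiple orbit to that of $f'$ near its image, the rotation numbers $\mu_i$ agree, whence $\theta_i = \theta_i'$. This yields (i)--(iii).

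For sufficiency I would realize $f$ as a branched $\mathbb{Z}/n$-cover. Let $\pi\colon \Sigma_g \to \mathcal{O} = \Sigma_g/C$ be the quotient; its underlying surface has some genus $h$ and $s$ cone points $p_1,\dots,p_s$ of orders $\lambda_1,\dots,\lambda_s$, and the Riemann--Hurwitz formula $2-2g = n\big(2-2h-\sum_i(1-\tfrac1{\lambda_i})\big)$ shows that $h$ is determined by $g$, $n$, and the $\lambda_i$. Fixing the identification $C \cong \mathbb{Z}/n$ with $f \mapsto 1$, the cover corresponds to a surjection $\rho\colon \pi_1^{orb}(\mathcal{O}) \to \mathbb{Z}/n$, where $\pi_1^{orb}(\mathcal{O})$ has the standard presentation with generators $a_1,b_1,\dots,a_h,b_h,x_1,\dots,x_s$ and relations $x_i^{\lambda_i}=1$, $\prod_j[a_j,b_j]\prod_i x_i = 1$. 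Unwinding the definition of valency, $\tfrac{\theta_i}{\lambda_i}$ records precisely the local monodromy $\rho(x_i) = \tfrac{n}{\lambda_i}\theta_i \in \mathbb{Z}/n$. Two such data give conjugate diffeomorphisms exactly when $\rho$ and $\rho'$ are carried to one another by an orientation-preserving diffeomorphism of the base orbifold, since any such equivalence lifts to the desired conjugacy $\phi$.

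It therefore remains to prove the normal-form statement: if $\rho,\rho'\colon \pi_1^{orb}(\mathcal{O}) \to \mathbb{Z}/n$ are surjective, share the same $n$ and $h$, and have the same multiset of local monodromies $\{\rho(x_i)\} = \{\rho'(x_i)\}$, then they lie in one orbit of the orbifold mapping class group. Since $\mathbb{Z}/n$ is abelian, $\rho$ factors through $H_1^{orb}(\mathcal{O})$, the relation reduces to $\sum_i \rho(x_i) = 0$, and $\rho$ is recorded by the values $\rho(a_j),\rho(b_j)\in\mathbb{Z}/n$ together with the already matched $\rho(x_i)$. The mapping class group acts through Dehn twists realizing the full symplectic group $\mathrm{Sp}(2h,\mathbb{Z})$ on the handle generators, through point-pushing maps that alter $\rho(a_j),\rho(b_j)$ by the $\rho(x_i)$, and through half-twists permuting cone points of equal monodromy (accounting for the reindexing in (iii)). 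I would use these moves to put the handle values in a standard form. The one delicate point is surjectivity: when the cone monodromies alone do not generate $\mathbb{Z}/n$, the handle generators must supply the missing part, and I must verify that the $\mathrm{Sp}(2h,\mathbb{Z})$-action renders this choice unique up to equivalence, so that no invariant beyond $(n,h,\{\theta_i/\lambda_i\})$ survives. This surjectivity bookkeeping is the main obstacle; once it is settled, transitivity of the action follows, and lifting the base equivalence produces the required $\phi$ with $\phi f \phi^{-1}=f'$.
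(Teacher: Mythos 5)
The paper offers no proof of this theorem at all---it is quoted as a classical result of Nielsen, with pointers to Ashikaga--Ishizaka and Hirose---so your attempt can only be measured against the standard literature proof, and your framework is indeed the standard one: necessity via invariance of local rotation data under orientation-preserving conjugation, and sufficiency via the dictionary between order-$n$ periodic diffeomorphisms and surjections $\rho\colon\pi_1^{orb}(\mathcal{O})\to\mathbb{Z}/n$, with the valency $\frac{\theta_i}{\lambda_i}$ encoding the local monodromy $\frac{n}{\lambda_i}\theta_i$. All of that translation is correct (including the point, which you handle properly, that one must match $\rho'$ with $\rho$ on the nose rather than up to an automorphism of $\mathbb{Z}/n$, since the latter would only conjugate $f$ into a power of $f'$).

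The problem is that the proposal stops exactly where the theorem begins. Everything up to your last paragraph is routine covering-space theory and bookkeeping; the entire content of Nielsen's theorem is the transitivity statement you defer---that the orbifold mapping class group acts transitively on surjections $H_1^{orb}(\mathcal{O})\to\mathbb{Z}/n$ with a fixed multiset of cone-point monodromies. You name the right toolkit ($\mathrm{Sp}(2h,\mathbb{Z})$ on handle values, point-pushing by cone monodromies, braiding of cone points of equal monodromy), but you explicitly concede that the normalization argument is ``the main obstacle'' and do not carry it out. That argument is not mere bookkeeping: one must show, for instance, that the handle vector can be reduced to $(e,0,\dots,0)$ and that any two admissible values of $e$ (those generating $\mathbb{Z}/n$ together with the subgroup $d$ generated by the $\rho(x_i)$) lie in a single orbit of the $\mathrm{Sp}$-action combined with translation by $d$; the interaction between the unit-multiplication available from $\mathrm{SL}(2,\mathbb{Z})$-moves and reduction modulo $d$ is precisely the delicate step, and it is the theorem (in the literature it is Nielsen's computation, or Edmonds' theorem on abelian group actions). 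As written, your proof is a correct reduction of the statement to an unproved statement of equivalent difficulty, so it must be counted as incomplete.
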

    
    By Theorem \ref{thm:Nielsen}, we can identify the total valency of a periodic diffeomorphism with its conjugacy class.
    We will use the following well-known observations of Nielsen.
    
        \begin{prop}[{Nielsen \cite[Equation (4.6)]{Nielsen}}]
        \label{prop:Nielsenint}
            For any periodic diffeomorphism, the sum of the valencies of all multiple orbits is an integer.
        \end{prop}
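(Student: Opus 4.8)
The plan is to realize the conjugacy-class data geometrically through the quotient branched covering $\pi\colon \Sigma_g \to \Sigma_g/C$ and to extract the integrality from the single defining relation of the fundamental group of a punctured surface. Write $\Sigma_h := \Sigma_g/C$ for the quotient, which is a closed orientable surface since $f$ preserves orientation, and let $p_1,\ldots,p_s \in \Sigma_h$ be the images of the multiple orbits, so that $p_i$ carries isotropy of order $\lambda_i$. Removing these points downstairs and their preimages upstairs, I would obtain a genuine regular $n$-fold covering $\Sigma_g^\ast \to \Sigma_h^\ast$ whose deck group is $C \cong \mathbb{Z}/n$; this covering is classified by a homomorphism $\rho\colon \pi_1(\Sigma_h^\ast) \to \mathbb{Z}/n$, where I identify $f$ with the generator $1 \in \mathbb{Z}/n$.

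Next I would use the standard presentation of $\pi_1(\Sigma_h^\ast)$ with generators $a_1,b_1,\ldots,a_h,b_h$ together with loops $c_1,\ldots,c_s$ encircling the punctures $p_1,\ldots,p_s$, subject to the single relation $\prod_{j=1}^h [a_j,b_j]\,\prod_{i=1}^s c_i = 1$. Because $\mathbb{Z}/n$ is abelian, applying $\rho$ annihilates every commutator, so the relation collapses to
\begin{equation*}
\sum_{i=1}^s \rho(c_i) = 0 \quad \text{in } \mathbb{Z}/n .
\end{equation*}
Equivalently, this is the statement that the classes $[c_i]$ sum to zero in $H_1(\Sigma_h^\ast;\mathbb{Z})$.

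It then remains to identify $\rho(c_i)$ with the valency data, and here I would work in the local model. Over a small disk around $p_i$ the covering is equivalent to $w \mapsto w^{\lambda_i}$, so that the monodromy of $c_i$ is the deck transformation sending $w \mapsto e^{2\pi i/\lambda_i}w$. On the other hand, the isotropy group at a preimage of $p_i$ is generated by $f^{n/\lambda_i}$, which by hypothesis acts as the clockwise rotation $w \mapsto e^{-2\pi i \mu_i/\lambda_i}w$. Using $\mu_i\theta_i \equiv 1 \pmod{\lambda_i}$, the transformation $w \mapsto e^{2\pi i/\lambda_i}w$ is the power $(f^{n/\lambda_i})^{-\theta_i}=f^{-n\theta_i/\lambda_i}$, giving $\rho(c_i) \equiv -\tfrac{n\theta_i}{\lambda_i} \pmod n$. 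Substituting into the displayed relation yields $n\sum_{i} \tfrac{\theta_i}{\lambda_i} \equiv 0 \pmod n$, and dividing by $n$ gives $\sum_i \tfrac{\theta_i}{\lambda_i} \in \mathbb{Z}$, as claimed.

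The covering-space classification and the surface relation are routine; the step demanding genuine care is the last one. The main obstacle is to pin down $\rho(c_i)$ exactly: matching the orientation of the encircling loop $c_i$ with the clockwise convention for the rotation angle, correctly inverting $\mu_i$ modulo $\lambda_i$ to recover $\theta_i$, and checking that the overall sign is immaterial since it affects every term of the congruence equally. Once the local monodromy is computed correctly, the integrality is immediate.
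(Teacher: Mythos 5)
Your proof is correct, but note that there is no in-paper argument to compare it against: the paper states this proposition as a quoted classical fact, citing Equation (4.6) of Nielsen \cite{Nielsen}, and gives no proof. What you have written is the standard modern covering-space proof of Nielsen's integrality condition, and it is sound: after deleting the multiple orbits the cyclic group $C=\langle f\rangle$ acts freely, so $\Sigma_g^{\ast}\to\Sigma_h^{\ast}$ is a regular $\mathbb{Z}/n$-covering classified by $\rho\colon\pi_1(\Sigma_h^{\ast})\to\mathbb{Z}/n$; abelianizing the relation $\prod_j[a_j,b_j]\prod_i c_i=1$ kills the commutators and gives $\sum_i\rho(c_i)=0$ in $\mathbb{Z}/n$; and the local model identifies $\rho(c_i)$ with $\mp n\theta_i/\lambda_i \bmod n$. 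The one delicate step is exactly the one you flag: the isotropy group at a preimage of $p_i$ is generated by $f^{n/\lambda_i}$ (the paper's definition of $\mu$ literally speaks of ``the restriction of $f$'', but for an orbit of period greater than one it must be read as the restriction of this isotropy generator, which is the reading you use), it acts as the clockwise rotation by $2\pi\mu_i/\lambda_i$, and inverting $\mu_i$ via $\mu_i\theta_i\equiv 1 \pmod{\lambda_i}$ shows the counterclockwise generator $w\mapsto e^{2\pi i/\lambda_i}w$ of the local deck group equals $\bigl(f^{n/\lambda_i}\bigr)^{-\theta_i}$. Your observation that the orientation convention for the loops $c_i$ flips the sign of every term simultaneously, hence cannot affect the conclusion $\sum_i\theta_i/\lambda_i\in\mathbb{Z}$, disposes of the only genuine ambiguity. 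In effect you have supplied the missing proof rather than reproduced one.
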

        
        \begin{rem}
        \label{rem:inv}
            Every multiple orbit of an involution is a fixed point whose valency is $\frac{1}{2}$.
            Thus, by Proposition \ref{prop:Nielsenint}, the number of fixed points of an involution is even.
        \end{rem}
    
    In general, the composite of two periodic diffeomorphisms may not be periodic, and hence, the product of total valencies does not make sense.
    But one can define powers of the total valency of a periodic diffeomorphism $f$ by the total valency of powers of $f$.
    
    Let us recall the well-known Riemann-Hurwitz formula for the convenience of the reader.
        
        \begin{prop}[{Riemann-Hurwitz formula}]
        \label{prop:RH}
            Consider an $n$-fold branched covering from $\Sigma_g$ to $\Sigma_{g'}$ with branching indices $\lambda_1, \dots, \lambda_s$.
            Then we have
                \[
                    2g-2 = n \left( 2g' -2 + \displaystyle\sum_{1 \leq i \leq s} \left( 1 - \frac{1}{\lambda_i} \right) \right).
                \]
        \end{prop}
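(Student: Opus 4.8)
The plan is to deduce the formula from the multiplicativity of the Euler characteristic under covering maps, corrected by the local behaviour at the branch points. Write $\pi \colon \Sigma_g \to \Sigma_{g'}$ for the branched covering, let $y_1, \ldots, y_s \in \Sigma_{g'}$ be the branch points, where $y_i$ has branch index $\lambda_i$, and recall that $\chi(\Sigma_g) = 2-2g$ and $\chi(\Sigma_{g'}) = 2 - 2g'$. Away from the finite set $\{y_1, \ldots, y_s\}$ the map $\pi$ restricts to a genuine $n$-fold covering, and the whole computation reduces to keeping track of how many sheets come together over each $y_i$.

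Concretely, I would argue by triangulation. First I choose a triangulation $T$ of $\Sigma_{g'}$ fine enough that every branch point $y_i$ is a vertex, which is always possible after a barycentric subdivision. Denote by $V$, $E$, $F$ the numbers of vertices, edges and faces of $T$, so that $V - E + F = \chi(\Sigma_{g'})$. Pulling $T$ back along $\pi$ produces a triangulation $\widetilde{T}$ of $\Sigma_g$: since $\pi$ is unramified over the interiors of the edges and faces, each open edge has exactly $n$ preimages and each open face has exactly $n$ preimages, so $\widetilde{T}$ has $nE$ edges and $nF$ faces.

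The one place where the count is not simply multiplied by $n$ is at the vertices, and this is the crux of the argument. Over an ordinary vertex $y \notin \{y_1, \ldots, y_s\}$ there are $n$ preimages, whereas over a branch point $y_i$, where $\pi$ looks locally like $z \mapsto z^{\lambda_i}$, there are only $n/\lambda_i$ preimages. Hence the number of vertices of $\widetilde{T}$ is
\[
\widetilde{V} = n(V - s) + \sum_{i=1}^{s} \frac{n}{\lambda_i}.
\]
Computing $\chi(\Sigma_g) = \widetilde{V} - \widetilde{E} + \widetilde{F}$ and substituting gives
\[
\chi(\Sigma_g) = n(V - E + F) - ns + \sum_{i=1}^{s}\frac{n}{\lambda_i} = n\,\chi(\Sigma_{g'}) - n\sum_{i=1}^{s}\left(1 - \frac{1}{\lambda_i}\right).
\]
Rewriting both Euler characteristics in terms of the genera and rearranging yields exactly $2g - 2 = n\bigl(2g' - 2 + \sum_{i}(1 - \tfrac{1}{\lambda_i})\bigr)$.

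The only genuine subtlety I anticipate is justifying the local model $z \mapsto z^{\lambda_i}$ and the resulting preimage count $n/\lambda_i$ over each branch point. In the setting relevant to this paper the branched covering arises as the quotient by a cyclic group action, so each $y_i$ is the image of a multiple orbit of period $n/\lambda_i$ with isotropy of order $\lambda_i$, and the local model is precisely the rotation description recalled earlier, which makes the count transparent.
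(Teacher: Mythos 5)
Your proof is correct. Note that the paper itself gives no proof of this proposition: it is stated purely as a recalled classical fact (``Let us recall the well-known Riemann--Hurwitz formula for the convenience of the reader''), so there is no internal argument to compare against; your triangulation argument is the standard one and fills this in properly. The computation $\widetilde{V} - \widetilde{E} + \widetilde{F} = n\,\chi(\Sigma_{g'}) - n\sum_{i}\bigl(1 - \tfrac{1}{\lambda_i}\bigr)$ is exactly right, and the rearrangement into the stated genus form is immediate.

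One point worth making explicit, which you partly address in your final paragraph: the formula as stated in the paper, with the correction term $n\sum_i (1 - \tfrac{1}{\lambda_i})$, is the \emph{uniform} (Galois) version of Riemann--Hurwitz. It presupposes that every preimage of the branch point $y_i$ has the same ramification index $\lambda_i$, so that there are exactly $n/\lambda_i$ preimages; for a general (non-Galois) branched covering, preimages of a single branch point may have different indices and the vertex count $\widetilde{V} = n(V-s) + \sum_i n/\lambda_i$ would fail. You correctly observe that in this paper the coverings are quotients by cyclic group actions, where the isotropy groups along a single orbit are conjugate (here even equal), so the uniformity holds and your count is justified. Making that hypothesis explicit at the start of the proof, rather than as a closing remark, would be the only improvement; as written, the middle of the argument silently uses it.
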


    We will use the following result due to Harvey, which gives a necessary condition for the existence of finite covering maps that have given branching indices.
	
        \begin{prop}[{Harvey \cite[Theorem 4]{Harvey}}]
        \label{prop:Harvey}
            Assume $g>1$ and let $n \in \ZZ_{\geq 2}$ and $\lambda_1, \dots, \lambda_s \in \ZZ_{\geq 2}$.
            Set $M = \opn{lcm}(\lambda_1,\lambda_2,\dots , \lambda_s)$.
            If there is an $n$-fold cyclic covering from $\Sigma_g$ to $\Sigma_{g'}$ with branching indices $\lambda_1,\dots, \lambda_s$, then the following conditions are satisfied:
                \begin{enumerate}
                \renewcommand{\labelenumi}{(\roman{enumi})}
                    \item $\opn{lcm}(\lambda_1,\lambda_2,\dots, \hat{\lambda_i}, \dots , \lambda_s) = M$ for all $i \in \{ 1,2,\dots, s\}$, where $\hat{\lambda_i}$ denotes the omission of $\lambda_i$.
                    \item $M$ divides $n$, and if $g' = 0$, then $M=n$.
                    \item $s \neq 1$, and, if $g'=0$, then $s \geq 3$.
                \end{enumerate}
        \end{prop}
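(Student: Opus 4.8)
The plan is to translate the existence of the cyclic branched covering into the existence of a suitable surjection of orbifold fundamental groups, and then to read off the three conditions from elementary facts about cyclic groups together with the Riemann--Hurwitz formula (Proposition \ref{prop:RH}). A degree $n$ cyclic branched covering $\Sigma_g \to \Sigma_{g'}$ with branch indices $\lambda_1, \ldots, \lambda_s$ is the same datum as a surjective homomorphism $\phi \colon \Gamma \to \bb{Z}/n$ from the orbifold fundamental group $\Gamma$ of $\Sigma_{g'}$ with cone points of orders $\lambda_1, \ldots, \lambda_s$, subject to the requirement that $\phi$ send the loop $x_i$ around the $i$-th cone point to an element of order exactly $\lambda_i$ (so that the local monodromy, hence the branch index, at that point is $\lambda_i$); surjectivity of $\phi$ corresponds to connectedness of $\Sigma_g$. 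I would begin by recording the standard presentation
\[
\Gamma = \Big\langle a_1, b_1, \ldots, a_{g'}, b_{g'}, x_1, \ldots, x_s \;\Big|\; x_i^{\lambda_i} = 1,\ \textstyle\prod_{j} [a_j, b_j] \prod_{i} x_i = 1 \Big\rangle,
\]
and set $m_i = \phi(x_i) \in \bb{Z}/n$, so that $\opn{ord}(m_i) = \lambda_i$ and, abelianizing the long relation (the commutators vanish in $\bb{Z}/n$), $\sum_{i} m_i = 0$.

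Condition (i) is then immediate: from $\sum_i m_i = 0$ we get $m_i = -\sum_{j \neq i} m_j$, and since the order of a sum in an abelian group divides the least common multiple of the orders of the summands, $\lambda_i = \opn{ord}(m_i)$ divides $\opn{lcm}_{j \neq i} \lambda_j$. Hence $\lambda_i$ contributes nothing new to the least common multiple, and $\opn{lcm}(\lambda_1, \ldots, \widehat{\lambda_i}, \ldots, \lambda_s) = M$ for every $i$. For condition (ii), each $\opn{ord}(m_i) = \lambda_i$ forces $\lambda_i \mid n$, whence $M \mid n$; and when $g' = 0$ there are no generators $a_j, b_j$, so the image of $\phi$ is $\langle m_1, \ldots, m_s\rangle$, which (as the subgroup generated by the unique cyclic subgroups of orders $\lambda_i$) is the unique subgroup of $\bb{Z}/n$ of order $M = \opn{lcm}_i \lambda_i$. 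Surjectivity then forces $M = n$.

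For condition (iii), if $s = 1$ the abelianized relation reads $m_1 = 0$, so $\lambda_1 = \opn{ord}(m_1) = 1$, contradicting that $x_1$ encircles a genuine branch point; thus $s \neq 1$. If $g' = 0$ and $s = 2$, then $m_1 + m_2 = 0$ and the image $\langle m_1, m_2 \rangle = \langle m_1 \rangle$ has order $\lambda_1$, so surjectivity gives $\lambda_1 = \lambda_2 = n$; feeding this into Riemann--Hurwitz (Proposition \ref{prop:RH}) yields $2g - 2 = -2$, i.e.\ $g = 0$, contradicting $g > 1$. Hence $g' = 0$ forces $s \geq 3$.

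I expect the main obstacle to be the setup rather than the computations: one must justify carefully the dictionary between cyclic branched coverings and surface-kernel epimorphisms---in particular that prescribing the image of $x_i$ to have order exactly $\lambda_i$ is equivalent to the branch index being $\lambda_i$, and that connectedness of $\Sigma_g$ matches surjectivity of $\phi$. Once this correspondence (a form of the Riemann existence theorem in the orbifold setting) is in place, all three conditions fall out from the single relation $\sum_i m_i = 0$, the structure of cyclic groups, and Riemann--Hurwitz, as above.
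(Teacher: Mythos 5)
The paper does not prove this proposition; it is quoted from Harvey \cite[Theorem 4]{Harvey}, so the only benchmark is Harvey's original argument, and your proof is correct and follows essentially that same route: pass to the monodromy epimorphism $\phi$ from the orbifold fundamental group of $\Sigma_{g'}$ onto the cyclic deck group $\mathbb{Z}/n$ with $\operatorname{ord}(\phi(x_i)) = \lambda_i$, and extract (i)--(iii) from the abelianized long relation $\sum_i m_i = 0$, the subgroup structure of cyclic groups, and Riemann--Hurwitz. The one substantive prerequisite is the dictionary you flagged yourself, and for the necessity direction asserted here only its easy half is needed: restricting the covering to the complement of the branch locus gives a regular $\mathbb{Z}/n$-covering, hence a surjection of $\pi_1$ killing each $x_i^{\lambda_i}$, and regularity forces every cycle of the monodromy permutation at $x_i$ to have length equal to $\operatorname{ord}(\phi(x_i))$, which is therefore exactly the local ramification index $\lambda_i$.
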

    
    We will use the following characterizations of reducibility of periodic diffeomorphisms.
    
    \begin{thm}[{Kasahara \cite[Theorem 4.1]{Kasahara}}]
    \label{thm:Kasahara}
        Let $f \in \operatorname{Diff}_+(\Sigma_g)$ be a periodic diffeomorphism of order $n$.
        If $f$ is irreducible, then $n \geq 2g+1$.
    \end{thm}
    
    \begin{thm}[{Wiman, see \cite[Theorem 5.1]{Hirose-Kasahara}}]
    \label{thm:Wiman}
        Let $f \in \operatorname{Diff}_+(\Sigma_g)$ be a periodic diffeomorphism of order $n$.
        Then, $n \leq 4g+2$.
    \end{thm}
    
    Moreover, for finite group actions, we will also use the following characterization due to Gilman.
        
        \begin{thm}[{Gilman \cite[Lemma 3.9]{Gilman}}]
        \label{thm:Gilman}
            Let $G$ be a finite subgroup of $\opn{Diff}_+(\Sigma_g)$ and $M_G$ be the union of all the multiple orbits of $G$.
            Then $G$ is irreducible if and only if $(\Sigma_g \backslash M_G) / G$ is homeomorphic to $S^2 \backslash \{3 \text{-points}\}$.
        \end{thm}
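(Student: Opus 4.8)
The plan is to pass to the quotient orbifold and translate reducibility of $G$ into the existence of an essential simple closed curve downstairs. Write $O=\Sigma_g/G$ for the quotient $2$-orbifold and $\pi\colon\Sigma_g\to O$ for the branched covering; let $h$ be the genus of the underlying surface $|O|$ and let $x_1,\dots,x_k$ be the cone points (the images of $M_G$), of orders $m_1,\dots,m_k$. Since $M_G=\pi^{-1}(\{x_1,\dots,x_k\})$, we have $(\Sigma_g\setminus M_G)/G=|O|\setminus\{x_1,\dots,x_k\}$, and this open surface is homeomorphic to $S^2\setminus\{3\text{-points}\}$ exactly when $h=0$ and $k=3$. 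Thus the theorem is equivalent to the assertion that $G$ is irreducible if and only if $O$ is a sphere with precisely three cone points, and I would prove the two implications by classifying which $O$ admit an essential curve.

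The heart of the argument is a dictionary between $G$-invariant reductions of $\Sigma_g$ and essential curves on $O$. Since $g>1$, I would fix a $G$-invariant hyperbolic metric on $\Sigma_g$ (by averaging, or via Nielsen realization), so that $O$ inherits a hyperbolic structure and $\pi$ becomes a covering of hyperbolic orbifolds; in particular $\pi_*\colon\pi_1(\Sigma_g)\to\pi_1^{\mathrm{orb}}(O)$ is injective with finite-index image, and $\pi_1(\Sigma_g)$ is torsion-free. Given an essential simple closed curve $c$ on $O$ disjoint from the cone points, its full preimage $\pi^{-1}(c)$ is a $G$-invariant family of disjoint simple closed curves; each component maps by a covering to $c$, so $\pi_*$ sends it to a power of $[c]$, which has infinite order, and by injectivity each component is essential in $\Sigma_g$, whence $G$ is reducible. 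Conversely, the geodesic representatives of a $G$-invariant family of essential isotopy classes form a $G$-invariant disjoint family of geodesics, whose image in $O$ is an essential curve (or, when a geodesic runs straight through a fixed point of an involution, an essential arc joining two order-$2$ cone points).

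It then remains to decide which $O$ carry such an essential $1$-submanifold. If $h\ge1$, a non-separating simple closed curve on $|O|$ away from the cone points is essential, so $G$ is reducible. If $h=0$ and $k\ge4$, a simple closed curve splitting the cone points into two groups each of size at least two bounds a disc with at least two cone points on each side and is therefore essential, so again $G$ is reducible. The case $h=0$, $k\le2$ cannot occur when $g>1$: by the Riemann–Hurwitz formula (Proposition \ref{prop:RH}) one would have $2g-2=|G|\bigl(-2+\sum_i(1-\tfrac1{m_i})\bigr)\le0$, contradicting $g>1$ (this is also the content of Proposition \ref{prop:Harvey}(iii)). Finally, if $h=0$ and $k=3$, every simple closed curve on $S^2$ avoiding the cone points bounds a disc containing at most one cone point and is hence inessential, and every arc between two order-$2$ cone points is enclosed by such a curve; so $O$ carries no essential $1$-submanifold and $G$ is irreducible.

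The step I expect to be the main obstacle is making the dictionary of the second paragraph precise, in two respects. First, I must realize a $G$-invariant collection of isotopy classes by a genuinely $G$-invariant geometric family and control its position relative to the finite set $M_G$; the delicate point is that a $G$-invariant geodesic may pass straight through a fixed point of an involution and thereby descend to an arc terminating at an order-$2$ cone point rather than to a closed curve, so the order-$2$ cone points must be handled on the same footing as honest simple closed curves. Second, I must verify the covering-space claim that the preimage of an orbifold-inessential curve is inessential upstairs and conversely; this rests on the local model that the $\mathbb{Z}/m$-branched cover of a disc with one cone point is again a disc, together with the torsion-freeness of $\pi_1(\Sigma_g)$. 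Once these two points are settled, the orbifold classification of the third paragraph completes the proof.
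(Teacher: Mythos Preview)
The paper does not give its own proof of this statement: Theorem~\ref{thm:Gilman} is quoted from Gilman \cite[Lemma 3.9]{Gilman} and used as a black box, so there is nothing in the paper to compare your argument against.

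That said, your outline is essentially the standard proof and is sound. The dictionary you set up---$G$-invariant essential multicurves on $\Sigma_g$ correspond, after passing to geodesic representatives in a $G$-invariant hyperbolic metric, to essential simple closed curves on the quotient orbifold together with arcs joining pairs of order-$2$ cone points---is exactly right, and you have correctly flagged the order-$2$ subtlety as the one genuinely delicate point. The case analysis on $(h,k)$ is the natural way to finish. One minor caveat: your exclusion of $h=0$, $k\le 2$ via Riemann--Hurwitz uses $g>1$, which is the ambient hypothesis throughout the paper's applications but is not stated in Theorem~\ref{thm:Gilman} itself; for $g=1$ the equivalence still holds (the possible quotients are listed in Proposition~\ref{prop:brto}, and one checks directly), while for $g=0$ the statement is false as written (a cyclic rotation of $S^2$ is vacuously irreducible but has only two cone points), so you should record the hypothesis $g\ge 1$ explicitly.
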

        
        \begin{rem}
            Let $G$ be a finite subgroup $\operatorname{Diff}_+(\Sigma_g)$.
            Let us prove that $\rho$ is reducible if and only if the image of $G$ under the projection $\operatorname{Diff}_+(\Sigma_g) \to \operatorname{Mod}(\Sigma_g)$.
            We can show a stronger result: By Kerckhoff's theorem \cite{Kerckhoff}, every finite subgroup $H$ of $\operatorname{Mod}(\Sigma_g)$ can be lifted to $\operatorname{Diff}_+(\Sigma_g)$, namely, there exists a subgroup $\widetilde{H}$ of $\operatorname{Diff}_+(\Sigma_g)$ such that the restriction of the projection $\operatorname{Diff}_+(\Sigma_g) \to \operatorname{Mod}(\Sigma_g)$ to $\widetilde{H}$ is an isomorphism from $\widetilde{H}$ to $H$. Moreover, $\widetilde{H}$ preserves a hyperbolic metric on $\Sigma_g$. We will prove that $H$ is reducible if and only if $\widetilde{H}$ is reducible.
        
            If $\widetilde{H}$ is reducible, then there is an $\widetilde{H}$-invariant partition $\mathcal{T}$ on $\Sigma_g$.
            The isotopy classes of $\mathcal{T}$ give a reduction system on $\Sigma_g$ for $H$, which implies the reducibility of $H$.
            Conversely, assume that $H$ is reducible.
            Then there exists an $H$-invariant reduction system $\mathcal{C} =\{C_1, \dots, C_n\}$.
            Since $\Sigma_g$ is hyperbolic, $C_i$ is represented by a unique simple closed geodesic $D_i$ on $\Sigma$ for $i=1, \dots, m$ (see, for example, \cite[Lemmas 3.3 and 3.4]{Poenaru}).
            By a classical result (for example, see \cite[Th\'eor\`eme 15]{Poenaru}), a pair of simple closed geodesics on a closed hyperbolic surface intersects with each other at the minimal number of points in their isotopy classes.
            Since the geometric intersection number of $C_i$ and $C_j$ is $0$ for $i \neq j$, the geodesics $D_1, \dots, D_m$ are mutually disjoint.
            Since $\widetilde{H}$ preserves the hyperbolic metric on $\Sigma_g$ and $D_i$ is the unique geodesic that represents $C_i$ up to free homotopy, it follows that $\{D_1, \dots, D_m\}$ is $\widetilde{H}$-invariant.
            Therefore, having an $\widetilde{H}$-invariant partition $\{D_1, \dots, D_m\}$, the action $\widetilde{H}$ is reducible.
        \end{rem}

\section{Preliminaries}
    
	Let us compute the total valency of $h_{n,p}$ defined in the introduction.

		\begin{prop}[{\cite[Section 2]{PRS19}, \cite[Theorem 2.2.4]{Dh20} and \cite[Proposition 4.1]{Takahashi}}]
		\label{prop:val}
			The total valency of $h_{n,p}$ is $\left[\, g, n\, ; \, \frac{1}{n} + \frac{p}{n} + \frac{n-p-1}{n} \,\right]$, where
                \[
                    g=\displaystyle\frac{n-\opn{gcd}(n, p) - \opn{gcd}(n, p+1) + 1}{2}.
                \]
		\end{prop}
		
		\begin{proof}
			Let $P$ be the $2n$-gon shown in Figure \ref{fig:Sig}.
                As shown below, $h_{n,p}$ has $3$ multiple orbits: The barycenter $x$ of $P$, the middle points $\{ y_0, y_1, \dots, y_{n-1} \}$ of the edges of $P$ and the vertices $\{ z_0, z_1, \dots, z_{n-1} \}$ of $P$ (see Figure \ref{fig:barycenter}).
			Thus, the total valency of $h_{n, p}$ is of the form $\left[ \,  g, n \,;\, \frac{\theta_1}{\lambda_1}+ \frac{\theta_2}{\lambda_2}+ \frac{\theta_3}{\lambda_3} \, \right]$ for some mutually coprime integers $\lambda_i$ and $ \theta_i$ such that $\lambda_i$ divides $n$ for $i = 1, 2, 3$.
			
			\begin{figure}[ht]
			\begin{tabular}{cc}
			     \begin{minipage}{0.45\textwidth}
			        \captionsetup{width=.8\linewidth}
			        \centering
	    		    \includegraphics[width=0.7\textwidth]{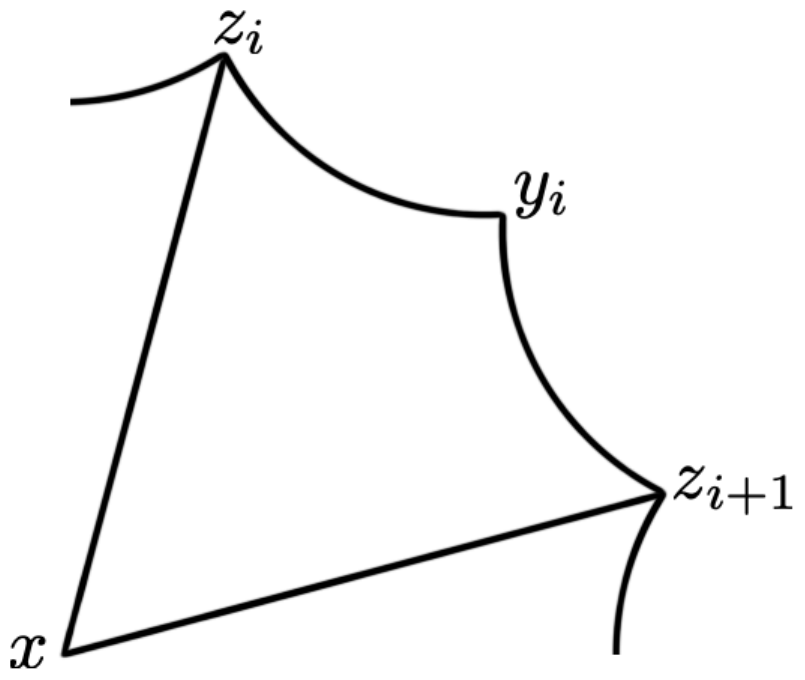}
        			\caption{The barycenter $x$, the middle points $y_i$ of edges and the vertices $z_i$ of $P$}
        			\label{fig:barycenter}
			     \end{minipage}\hfill
			     \begin{minipage}{0.45\textwidth}
			        \captionsetup{width=.75\linewidth}
			        \centering
			        \includegraphics[width=0.7\textwidth]{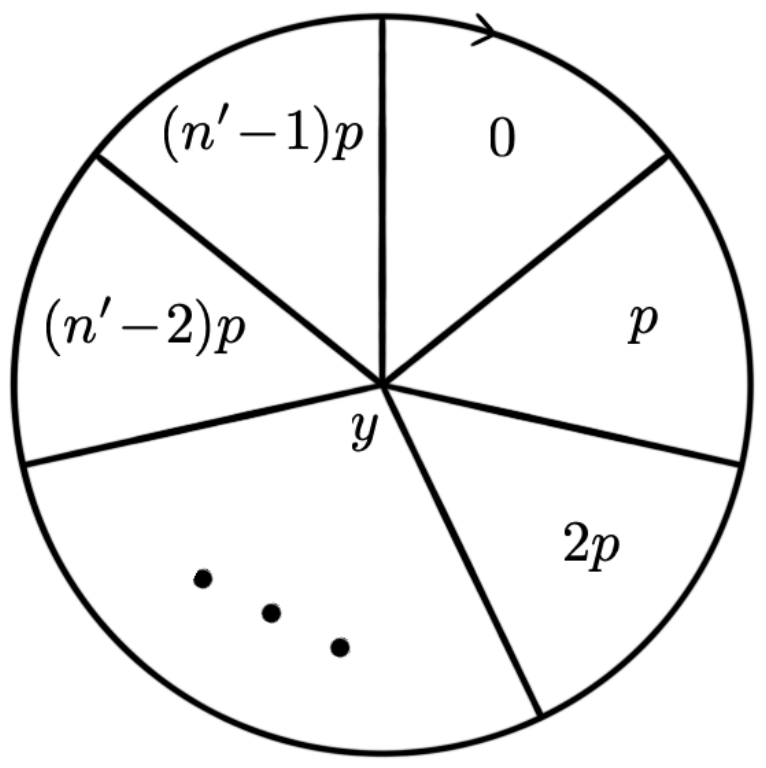}
				    \caption{The cyclic order of fundamental domains of $h_{n,p}$ around $y$}
				    \label{fig:Sigval}
			     \end{minipage}
			\end{tabular}
	    	\end{figure}
			
			Since the action is $\frac{2\pi}{n}$ rotation on the $2n$-gon, the valency of the fixed point $x$ is $\frac{1}{n}$.
			Hence, we can assume $\frac{\theta_1}{\lambda_1} = \frac{1}{n}$.
			
			Let us compute the valency $\frac{\theta_2}{\lambda_2}$ of the multiple orbit $\{ y_0, y_1, \dots, y_{n-1} \}$.
			Set $k = \gcd (n, p)$.
			Let $n'$ and $p'$ be the integer such that $n = k n'$ and $p = k p'$, respectively.
			The period of the multiple orbits is $k$, and hence, $\lambda_2 = n'$.
			Take a point $y$ in the orbit and consider a loop that goes around the boundary of a small disk centered at $y$ as shown in Figure \ref{fig:Sigval}.
			The fundamental domains appear along this loop in the order
				\[
					0, \, p, \, 2p, \dots,  (n'-2)p, \, (n'-1)p
				\]
			modulo $n$.
			Then, there uniquely exists $\nu \in \left\{ 1, 2, \dots, n' \right\}$ such that $\nu p \equiv k \, \mod n$.
			Hence, we have $\nu p' \equiv 1 \mod n'$.
			By the definition of the valency, $\theta_2 = p'$, and hence, $\frac{\theta_2}{\lambda_2} = \frac{p'}{n'} = \frac{p}{n}$.
			
			By Nielsen's result (Proposition \ref{prop:Nielsenint}), we have
				\[
					\frac{1}{n} + \frac{p}{n} + \frac{\theta_3}{\lambda_3} \in \bb{Z}.
				\]
			Thus, we have $\frac{\theta_3}{\lambda_3} = \frac{n-p-1}{n}$.
			
			Lastly, we compute the genus $g$ of the surface.
			The branching indices of $h_{n,p}$ are $\displaystyle n, \frac{n}{\opn{gcd}(n,p)}$ and $\displaystyle\frac{n}{\opn{gcd}(n, n-p-1)}$.
			It follows from the definition of the map $h_{n,p}$ that the quotient space of $h_{n,p}$ is a sphere.
			Then, by Riemann-Hurwitz formula (Proposition \ref{prop:RH}), we have
			    \begin{align*}
			       \displaystyle 2g-2 =&\, n\left( -2 + 3 - \left( \frac{1}{n} + \frac{\opn{gcd}(n,p)}{n} + \frac{\opn{gcd}(n,n-p-1)}{n} \right) \right)\\
			            \displaystyle  =&\, n - \operatorname{gcd}(n,p) - \opn{gcd}(n,p+1) - 1,
			    \end{align*}
		    and hence, we have
			    \[
			        \displaystyle g = \frac{n - \operatorname{gcd}(n,p) - \opn{gcd}(n,p+1) + 1}{2}.
			    \]
		\end{proof}

    Applying the Riemann-Hurwitz formula (Proposition \ref{prop:RH}), we can obtain the following well-known classification of periodic diffeomorphisms on tori.

        \begin{prop}
        \label{prop:brto}
            Let $f$ be a nontrivial orientation-preserving periodic diffeomorphism on a torus $T^2$.
            Then $T^2/\langle f \rangle$ is homeomorphic to either $T^{2}$ or $S^{2}$.
            
            If $T^2/\langle f \rangle$ is homeomorphic to $T^2$, then $f$ has no multiple orbit.
            If $T^2/\langle f \rangle$ is homeomorphic to $S^2$, then $f$ is conjugate to one of the following:
                \begin{enumerate}
                    \item $h_{4,1}^2$ or $h_{6,3}^3$ whose total valency are $\displaystyle\left[\,1,2\,;\, \frac{1}{2} + \frac{1}{2} + \frac{1}{2} + \frac{1}{2}\,\right]$.
                    
                    \item $h_{4,1}$ whose total valency is $\displaystyle\left[\,1,4\,;\, \frac{1}{4} + \frac{1}{4} + \frac{1}{2}\,\right]$.
                    
                    \item $f_{4,1}^3$ whose total valency is $\displaystyle\left[\,1,4\,;\, \frac{3}{4} + \frac{3}{4} + \frac{1}{2}\,\right]$.
                    
                    \item $h_{6,3}$ whose total valency is $\displaystyle\left[\,1,6\,;\, \frac{1}{6} + \frac{1}{3} + \frac{1}{2}\,\right]$.
                    
                    \item $h_{3,1}$ or $h_{6,3}^{2}$ whose total valency are $\displaystyle\left[\,1,3\,;\, \frac{1}{3} + \frac{1}{3} + \frac{1}{3}\,\right]$.
                    
                    \item $h_{3,1}^2$ or $h_{6,3}^{4}$ whose total valency are $\displaystyle\left[\,1,3\,;\, \frac{2}{3} + \frac{2}{3} + \frac{2}{3}\,\right]$.
                    
                    \item $h_{6,3}^{5}$ whose total valency is $\displaystyle\left[\,1,6\,;\, \frac{5}{6} + \frac{2}{3} + \frac{1}{2}\,\right]$.
                \end{enumerate}
        \end{prop}
    
    The diffeomorphism $h_{3,1}$ is conjugate to the diffeomorphism $h_{6,3}^2$.
    Indeed, both are represented by a $\displaystyle\frac{\pi}{3}$-rotation of a regular hexagon (see Figures \ref{fig:h31} and \ref{fig:1232}).
    
        \begin{figure}[ht]
        \begin{tabular}{cc}
            \begin{minipage}{0.45\textwidth}
                \captionsetup{width=0.85\linewidth}
                \includegraphics[width=0.9\textwidth]{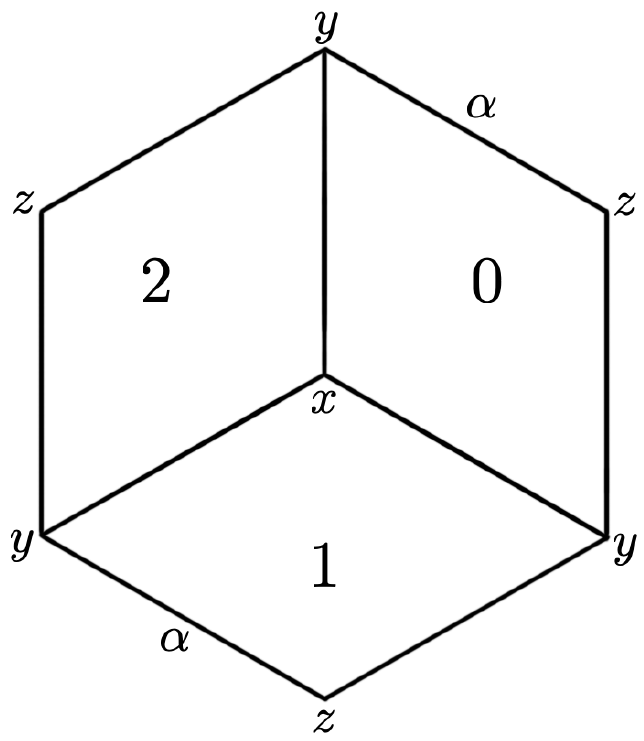}
                \caption{$h_{3,1}$}
                \label{fig:h31}
            \end{minipage}\hfill
            \begin{minipage}{0.45\textwidth}
                \captionsetup{width=0.85\linewidth}
                \includegraphics[width=1\textwidth]{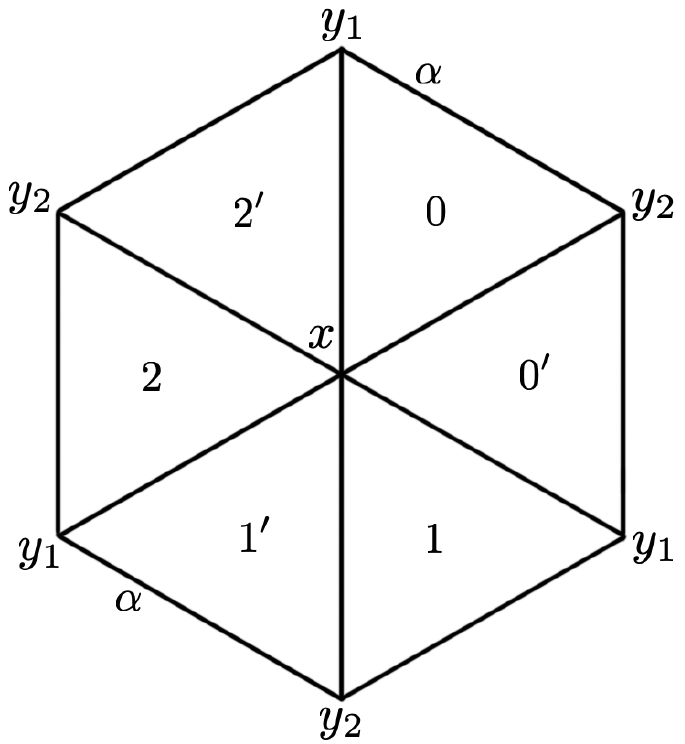}
                \caption{$h_{6,3}^2$}
                \label{fig:1232}
            \end{minipage}
        \end{tabular}
        \end{figure}

    Since any multiple-point free periodic diffeomorphism on a torus is reducible, we get the following consequence.
    
        \begin{prop}
        \label{prop:torusirr}
            Every irreducible periodic diffeomorphism on a torus is conjugate to one of the following: 
                \begin{enumerate}
                    \item $h_{3,1}$ or its inverse,
                    \item $h_{4,1}$ or its inverse, 
                    \item $h_{6,3}$ or its inverse.
                \end{enumerate}
        \end{prop}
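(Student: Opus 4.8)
The plan is to feed the list obtained in Proposition~\ref{prop:brto} into Gilman's irreducibility criterion (Theorem~\ref{thm:Gilman}). Since an irreducible diffeomorphism is in particular nontrivial, Proposition~\ref{prop:brto} applies to any $f$ as in the statement. Writing $G=\langle f\rangle$, I would first observe that the locus $M_G$ of points with nontrivial stabilizer is precisely the union of the multiple orbits of $f$, so its image in $T/G$ is the set of branch points and $(T\setminus M_G)/G$ is the quotient surface punctured at these branch points. The number of such branch points equals the number of summands in the total valency of $f$. Theorem~\ref{thm:Gilman} then reduces irreducibility to the single requirement that $T/G\cong S^2$ and the total valency of $f$ has exactly three summands.

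With this reformulation the argument splits along the dichotomy of Proposition~\ref{prop:brto}. If $T/\langle f\rangle\cong T^2$, then $f$ is multiple orbit free, so $M_G=\varnothing$ and $(T\setminus M_G)/G\cong T^2$, which is not homeomorphic to $S^2\setminus\{3\text{-points}\}$; hence such $f$ are reducible and may be discarded, recovering the remark preceding the statement. It then remains to inspect the valencies realized when $T/\langle f\rangle\cong S^2$ and keep those with exactly three summands.

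Running through the seven total valencies listed in Proposition~\ref{prop:brto}, the valency $[\,1,2\,;\,\tfrac12+\tfrac12+\tfrac12+\tfrac12\,]$ of $h_{4,1}^2=h_{6,3}^3$ has four summands and is therefore reducible, while each of the remaining six has three summands and is thus irreducible. Finally I would organize these six into the three stated families by recognizing inverses and invoking the identification $h_{3,1}=h_{6,3}^2$ noted above: $h_{4,1}$ pairs with $h_{4,1}^3=h_{4,1}^{-1}$; the map $h_{6,3}$ pairs with $h_{6,3}^5=h_{6,3}^{-1}$; and $h_{6,3}^2=h_{3,1}$ pairs with $h_{6,3}^4=h_{3,1}^{-1}$. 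This yields exactly the list (i)--(iii).

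The step I expect to carry the real content is the translation of Gilman's criterion in the first paragraph, namely identifying $M_G$ with the multiple orbits and the branch points with the valency summands; once this is in place everything else is bookkeeping, and there is no genuine obstacle, since Proposition~\ref{prop:brto} already supplies the complete list of candidates.
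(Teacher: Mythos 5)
Your proposal is correct, and its skeleton --- feeding the list of Proposition~\ref{prop:brto} into a reducibility test --- is the same as the paper's; what differs is the test. The paper never invokes Gilman's criterion in this section: it disposes of the case $T/\langle f\rangle\cong T^{2}$ by the direct observation that a multiple-orbit-free periodic diffeomorphism of the torus is reducible, and then simply ``gets the consequence.'' In particular, the paper's one-line argument says nothing about the involution $h_{4,1}^{2}=h_{6,3}^{3}$ with total valency $\left[\,1,2\,;\,\frac12+\frac12+\frac12+\frac12\,\right]$, which is not conjugate to any map on the list and therefore must also be shown to be reducible (it is: it acts as $-\mathrm{id}$ on homology and preserves every isotopy class of essential simple closed curves). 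Your count of valency summands handles that case and the multiple-orbit-free case uniformly via Theorem~\ref{thm:Gilman}, so your route is actually more complete than the paper's; it also yields the converse, that the six listed maps are genuinely irreducible, which the proposition does not claim but which justifies the classification being sharp. The only caveat worth flagging is that elsewhere the paper applies Theorem~\ref{thm:Gilman} only for $g>1$, whereas you apply it to genus one; as stated in the paper the theorem carries no genus restriction, so your use is consistent with it, but if one is worried about the scope of Gilman's lemma, the two discarded cases can be handled by the elementary arguments above, after which your bookkeeping of inverses and the identification $h_{3,1}=h_{6,3}^{2}$ goes through unchanged.
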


	Here $h_{3,1}, h_{4,1}$ and $h_{6,3}$ are shown in Figures \ref{fig:h31}, \ref{fig:112} and \ref{fig:123}, respectively.

		\begin{figure}[ht]
			\centering
			\begin{tabular}{cc}
			\begin{minipage}{0.45\textwidth}
				\centering
				\includegraphics[width=1\textwidth]{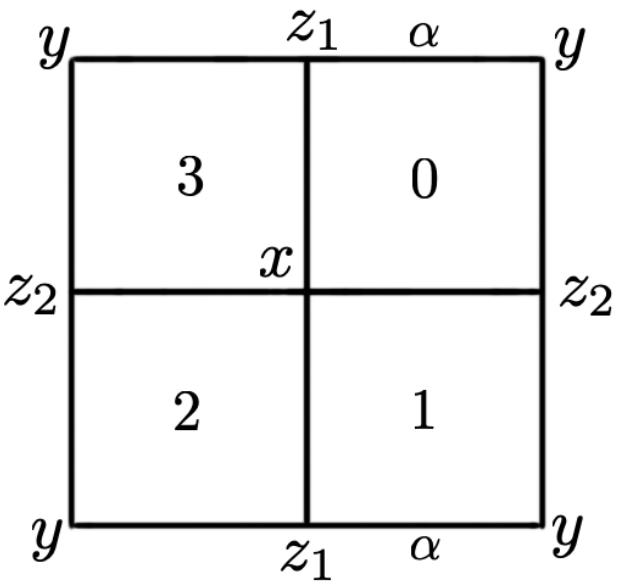}
				\caption{$h_{4,1}$}
				\label{fig:112}
			\end{minipage}\hfill
			\begin{minipage}{0.45\textwidth}
				\centering
				\includegraphics[width=0.9\textwidth]{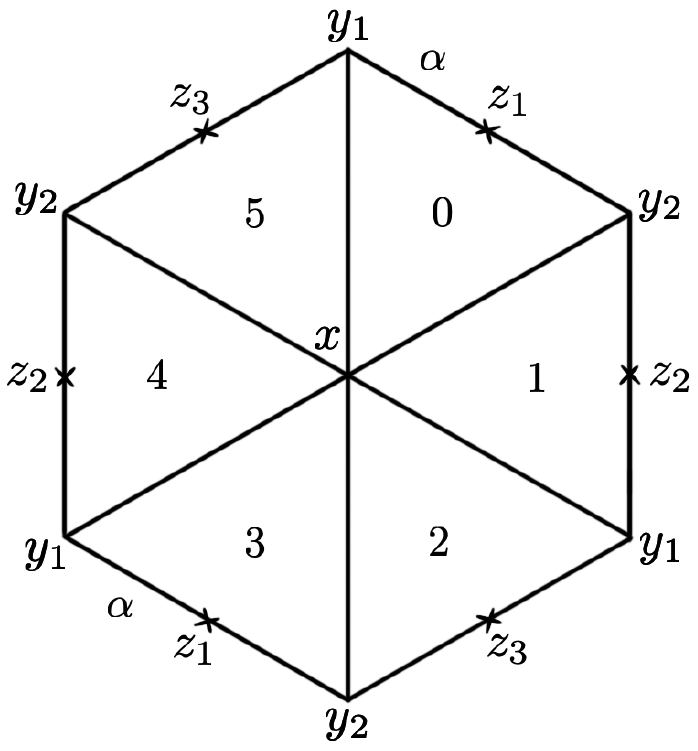}
				\caption{$h_{6,3}$}
				\label{fig:123}
			\end{minipage}
			\end{tabular}
		\end{figure}
		
\section{Cyclic branched coverings of tori}
    
    In this section, we will classify irreducible periodic diffeomorphisms that commute with an involution $\iota_{g}$ such that $\Sigma_{g}/ \langle\iota_{g}\rangle$ is a torus to prove Theorem \ref{thm:irr1}.
    
    Take an involution $\iota_g$ of $\Sigma_g$ whose quotient space is homeomorphic to a torus.
    Let $f$ be a periodic diffeomorphism of $\Sigma_g$ of order $n$, which commutes with $\iota_g$.
    Let $G$ be the subgroup of $\operatorname{Diff}_{+}(\Sigma_g)$ generated by $f$ and $\iota_g$.
    
    Let $\bar{f}$ be a periodic diffeomorphism of $\Sigma_{g} / \langle \iota_g \rangle$ induced by $f$.
    Let $n$ and $\bar{n}$ be the order of $f$ and $\bar{f}$, respectively.
    We have the following simple observation.
    
        \begin{lem}
        \label{lem:1}
            \begin{enumerate}
                \item We have $n = 2 \bar{n}$ if and only if $f^{\bar{n}} = \iota_g$.
                \item We have $n = \bar{n}$ if and only if $f^{\bar{n}} \neq \iota_g$.
            \end{enumerate}
        \end{lem}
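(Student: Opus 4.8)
The plan is to exploit that $G=\langle f,\iota_g\rangle$ is abelian, so that $f$ descends to $\bar f$ on the branched double cover $\pi\colon\Sigma_g\to T=\Sigma_g/\langle\iota_g\rangle$, and then to identify the power $f^{\bar n}$ with a deck transformation of that cover.

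First I would record two elementary facts. Since $\pi\circ f=\bar f\circ\pi$ and $f^{n}=\mathrm{id}$, we get $\bar f^{\,n}=\mathrm{id}$, hence $\bar n\mid n$. Next, because $\bar f^{\,\bar n}=\mathrm{id}$, the diffeomorphism $f^{\bar n}$ satisfies $\pi\circ f^{\bar n}=\pi$; in other words $f^{\bar n}(x)$ lies in the same $\iota_g$-orbit as $x$ for every $x\in\Sigma_g$, so $f^{\bar n}$ preserves every fiber of $\pi$.

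The key step, and the main point of the argument, is to promote this fiberwise condition to the global statement $f^{\bar n}\in\{\mathrm{id},\iota_g\}$. Let $B\subset\Sigma_g$ be the finite fixed-point set of $\iota_g$, i.e.\ the branch locus of $\pi$; note $B=\pi^{-1}(\pi(B))$, so since $f^{\bar n}$ preserves fibers it restricts to a homeomorphism of $\Sigma_g\setminus B$. Over $T\setminus\pi(B)$ the map $\pi$ is an honest (Galois) double covering with deck group $\langle\iota_g\rangle\cong\mathbb{Z}/2$, and $\Sigma_g\setminus B$ is connected because $B$ is finite. As $f^{\bar n}$ preserves each fiber, its restriction to $\Sigma_g\setminus B$ is a deck transformation of this covering; a deck transformation of a connected covering is determined by the image of a single point, so this restriction is either the identity or $\iota_g$ throughout $\Sigma_g\setminus B$, and by continuity the same holds on all of $\Sigma_g$. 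I expect this connectedness-and-deck-group argument, taken with proper account of the branch points, to be the only genuinely delicate point.

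Finally I would run the dichotomy $f^{\bar n}\in\{\mathrm{id},\iota_g\}$. If $f^{\bar n}=\mathrm{id}$, then $n\mid\bar n$, which together with $\bar n\mid n$ forces $n=\bar n$; this is exactly the situation $f^{\bar n}\neq\iota_g$, yielding one implication of each of (1) and (2). If instead $f^{\bar n}=\iota_g$, then $f^{2\bar n}=\iota_g^{2}=\mathrm{id}$ gives $n\mid 2\bar n$, while $f^{\bar n}\neq\mathrm{id}$ gives $n\nmid\bar n$; combined with $\bar n\mid n$ this forces $n=2\bar n$. Reading these two exhaustive cases in both directions establishes the equivalences in (1) and (2).
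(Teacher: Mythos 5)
Your proposal is correct and follows essentially the same route as the paper: both arguments identify $f^{\bar{n}}$ as a lift of $\operatorname{id}_{\Sigma_g/\langle\iota_g\rangle}$ (i.e.\ a deck transformation of the double cover), conclude $f^{\bar{n}}\in\{\operatorname{id}_{\Sigma_g},\iota_g\}$, and then run the order arithmetic. The only difference is that you spell out, via restriction to the unbranched covering and connectedness, why the lifts of the identity are exactly $\operatorname{id}_{\Sigma_g}$ and $\iota_g$ --- a step the paper simply asserts.
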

        
        \begin{proof}
            We have $\bar{f}^{\bar{n}} = \opn{id}_{\Sigma_{g}/\langle \iota_g \rangle}$.
            Since the lifts of $\opn{id}_{\Sigma_{g} / \langle \iota_g \rangle}$ to $\Sigma_{g}$ are $\iota_g$ and $\opn{id}_{\Sigma_{g}}$, we have either $f^{\bar{n}} = \iota_g$ or $f^{\bar{n}} = \opn{id}_{\Sigma_{g}}$.
            The latter is equivalent to $n = \bar{n}$, and the former implies $n = 2\bar{n}$.
            The proof is concluded.
        \end{proof}
        
        \begin{cor}
        \label{cor:1}
            $n$ is even and $G$ is cyclic if and only if $n = 2 \bar{n}$.
        \end{cor}
        
        \begin{proof}
                By Lemma \ref{lem:1}, the necessity holds.
            Assume that $n$ is even and $G$ is cyclic.
                Then we have $f^{\frac{n}{2}} = \iota_g$, which implies that $G/\langle \iota_g \rangle$ is of order $\frac{n}{2}$.
                Since $\bar{f}$ is a generator of $G/\langle \iota_g \rangle$ acting on $\Sigma_g$, it follows that $\bar{n} = \displaystyle\frac{n}{2}$.
        \end{proof}
        
    We will use the following result.
        
        \begin{lem}
            \label{lem:fix3}
                For any $n \in \ZZ_{\geq 2}$, there exists no $n$-fold cyclic branched covering $p : S^{2} \to S^{2}$ which has more than two branched points such that for the branched points $\{ x_1, x_2, x_3 \}$, $p^{-1}(x_{i})$ is a one-point set for $i=1,2,3$.
                In particular, there exists no periodic diffeomorphism of $S^{2}$ which fixes more than two points except the identity.
            \end{lem}
            
            \begin{proof}
                Assume that there exists an $n$-fold branched covering $p : S^{2} \to S^{2}$ which has more than two branched points $x_{1}, x_{2}, \dots, x_{s} \in S^{2}$ such that $p^{-1}(x_{i})$ is a one-point set for $i=1,2,3$.
                By Riemann-Hurwitz formula (Proposition \ref{prop:RH}), we have
                    \[
                        -2 = n \left( -2 + 3 \left( 1 - \frac{1}{n} \right) + \sum_{i=4}^{s}\left( 1 - \frac{1}{\lambda_{i}} \right) \right),
                    \]
                where $\lambda_{i}$ is the branching index of $x_{i}$ for $i=4, \dots, s$. Then we have 
                    \[
                         1-n = \sum_{i=4}^{s}\left( 1 - \frac{1}{\lambda_{i}} \right) \geq 0,
                    \]
                which implies $n \leq 1$, and hence a contradiction.
            \end{proof}
    
    The following simple observation is fundamental. 
    
        \begin{lem}
        \label{lem:pres}
            \begin{enumerate}
                \item We have $f \left(\opn{Fix} (\iota_{g}) \right) = \opn{Fix} (\iota_{g})$, where $\opn{Fix} (\iota_{g})$ is the fixed-point set of $\iota_g$.
                \item The involution $\iota_g$ maps every $f$-orbit to an $f$-orbit of the same valency.
            \end{enumerate}
        \end{lem}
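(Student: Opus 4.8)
The plan is to derive both statements from the single hypothesis that $f\iota_g = \iota_g f$, together with the fact that $\iota_g$ is orientation-preserving; the latter holds because $\Sigma_g/\langle\iota_g\rangle \cong T^2$ is orientable, so $\iota_g \in \opn{Diff}_+(\Sigma_g)$.

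For (1), I would argue directly from commutativity. If $x \in \opn{Fix}(\iota_g)$, then $\iota_g(f(x)) = f(\iota_g(x)) = f(x)$, so $f(x) \in \opn{Fix}(\iota_g)$; thus $f(\opn{Fix}(\iota_g)) \subseteq \opn{Fix}(\iota_g)$. Since $\iota_g$ is a nontrivial periodic diffeomorphism, its fixed-point set is finite, and $f$ restricts to an injection of this finite set into itself, hence a bijection, which yields $f(\opn{Fix}(\iota_g)) = \opn{Fix}(\iota_g)$.

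For (2), the first step is to check that $\iota_g$ carries $f$-orbits to $f$-orbits: for the orbit of a point $x$, commutativity gives $\iota_g(f^k(x)) = f^k(\iota_g(x))$ for all $k$, so $\iota_g(\langle f\rangle x) = \langle f\rangle\,\iota_g(x)$. Next I would show the isotropy data is unchanged. Because $\iota_g$ is injective, $f^k(\iota_g(x)) = \iota_g(x)$ holds if and only if $f^k(x) = x$; hence $C_{\iota_g(x)} = C_x$, so in particular both equal the unique subgroup of $\langle f\rangle$ of order $\lambda$, and the period $\tfrac{n}{\lambda}$ is preserved. It remains to compare the local rotation numbers. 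Writing $h = f^{n/\lambda}$ for the generator of this common isotropy group, $h$ acts near $x$ as the clockwise $\tfrac{2\pi\mu}{\lambda}$-rotation that defines the valency. Since $\iota_g h \iota_g^{-1} = h$, the map $\iota_g$ takes a small $h$-invariant disk around $x$ to one around $\iota_g(x)$ and conjugates the $h$-action near $x$ to the $h$-action near $\iota_g(x)$; passing to tangent spaces, $d\iota_g$ intertwines the two rotations.

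The decisive point---and the only step requiring more than formal manipulation---is that the rotation number is an oriented quantity. Because $\iota_g$ is orientation-preserving, $d\iota_g$ preserves orientation, and conjugating a planar rotation by an orientation-preserving linear isomorphism leaves both its angle and its sense of rotation unchanged. Hence $h$ acts near $\iota_g(x)$ again as the clockwise $\tfrac{2\pi\mu}{\lambda}$-rotation with the same $\mu$, so the inverse $\theta$ of $\mu$ modulo $\lambda$, and thus the valency $\tfrac{\theta}{\lambda}$, is preserved. I expect this orientation bookkeeping to be the main thing to get right: had $\iota_g$ reversed orientation, the valency would instead be sent to its complement $\tfrac{\lambda-\theta}{\lambda}$.
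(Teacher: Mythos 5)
Your proof is correct and takes essentially the same route as the paper's: the paper's two-line argument observes that the valency of an orbit is determined by the isotropy action and that the isotropy action is conjugation-invariant, which is exactly what you verify in expanded form. Your explicit handling of the orientation point (which the paper leaves implicit, since all maps live in $\opn{Diff}_+(\Sigma_{g})$) correctly isolates the step that makes the valency itself, rather than its complement $\tfrac{\lambda-\theta}{\lambda}$, the invariant quantity.
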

        
        \begin{proof}
                For any $x \in \operatorname{Fix}(\iota_g)$, we have $\iota_{g}(f(x)) = f(\iota_g(x))= f(x)$, which implies $f(x) \in \operatorname{Fix}(\iota_g)$.
                We can prove $f^{-1}(x) \in \operatorname{Fix}(\iota_g)$ in a similar way.
                Therefore we have the first assertion $f \left(\opn{Fix} (\iota_{g}) \right) = \opn{Fix} (\iota_{g})$.
    
                Let $x \in \Sigma_g$ be a multiple point of period $\ell < n$.
                Take a disk neighborhood $U$ of $x$ such that $f^{\ell}$ acts on $U$ by the clockwise $\frac{2\pi\mu}{\lambda}$-rotation for some $\displaystyle 1 \leq \mu < \lambda$, where $\lambda = \frac{n}{\ell}$.
                Since we have $\iota_g \circ f^{\ell} \circ (\iota_g)^{-1} = f^{\ell}$, the map $f^{\ell}$ acts on $\iota_g(U)$ by the clockwise $\frac{2\pi\mu}{\lambda}$-rotation.
                Thus, by the definition, the valency of $\{x, f(x),\dots,f^{\ell-1}(x)\}$ is equal to that of $\{\iota_g(x), f( \iota_g(x)),\dots,f^{\ell-1}(\iota_g(x))\}$.
        \end{proof}
    
    In the remainder of this section, we will classify the $G$-action up to conjugacy in the case where $G$ is cyclic.
    Let $\bar{f}$ be the diffeomorphism on $\Sigma_g / \langle \iota_g \rangle$ induced by $f$.
        Let $p$ denote the double covering $\Sigma_g \to \Sigma_g / \langle \iota_g \rangle$.
    In the sequel, we will use the following criterion.
    
        \begin{lem}
        \label{lem:brodd}
            Let $x \in \Sigma_g$ be a multiple point of $f$ and $\operatorname{Stab}_G(x)$ the stabilizer subgroup of $G$ with respect to $x$.
            If $\iota_g$ fixes $x$ and $\operatorname{Stab}_G(x)$ is of even order, then $G$ is cyclic.
        \end{lem}
        
        \begin{proof}
            Let $k$ be the period of $x$.
            By assumption, the stabilizer subgroup $\operatorname{Stab}_G(x)$ of $G$ with respect to $x$ is generated by $f^{k}$ and $\iota_{g}$.
            Since $\frac{n}{k}$ is even, the subgroup of $\operatorname{Stab}_G(x)$ generated by $f^{k}$ is of even order.
            Thus, it contains $f^{\frac{n}{2}}$, which is a $\pi$-rotation around $x$.
                Since $\iota_g$ is also a $\pi$-rotation around $x$, it follows that $\iota_g \circ (f^{\frac{n}{2}})^{-1}$ acts on an neighborhood $V$ of $x$ by the identity.
                Let $h=\iota_g \circ (f^{\frac{n}{2}})^{-1}$ and $H$ be the subgroup of $\operatorname{Diff}_+(\Sigma_g)$ generated by $h$.
                Since $\Sigma_g/\langle h \rangle$ is connected, by \cite[Proposition 1.2.5]{Au04}, there exists an open dense subset $U$ of $\Sigma$ such that the isotropy group of the $H$-action is the same at all points in $x$ in $U$.
                Since $U \cap V \neq \emptyset$, it follows that $h$ fixes a point in $U$, and hence $h$ fixes all points in $U$, which implies that $h = \operatorname{id}_{\Sigma_g}$.
                Therefore, it follows that $f^{\frac{n}{2}} = \iota_{g}$, which implies that $G$ is cyclic.
        \end{proof}
    
        \begin{rem}
        \label{rem:1}
            If $\bar{f}$ is reducible, then we can take a partition $\mathcal{C}$ preserved by $\bar{f}$ such that $\mathcal{C} \cap \operatorname{Fix}(\iota_g) = \emptyset$.
            Then $G$ preserves $p^{-1}(\mathcal{C})$.
            Therefore, if $\bar{f}$ is reducible, then $G$ is reducible.
            Hence, to classify irreducible group $G$, it is sufficient to consider the case where $\bar{f}$ is irreducible.
        \end{rem}
    
        \begin{figure}[ht]
            \centering
                \includegraphics[width=0.45\textwidth]{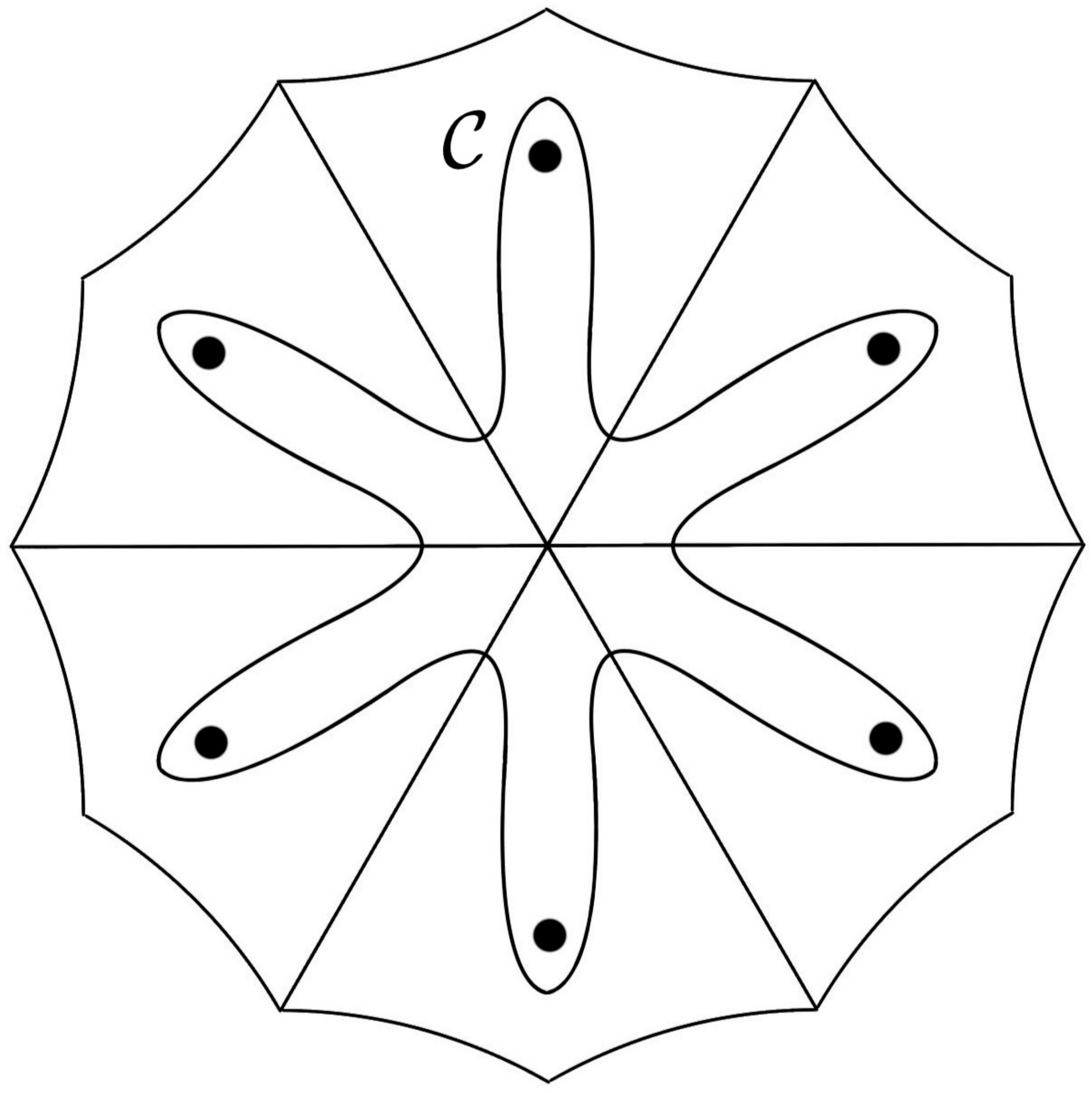}
                \caption{A simple closed curve $\mathcal{C}$}
                \label{fig:simplebr}
        \end{figure}
        
        \begin{lem}
        \label{lem:freered}
            Assume that $\bar{f}$ is conjugate a power of $h_{n,p}$.	
            If the canonical projection $p : \Sigma_g \to \Sigma_g / \langle \iota_g \rangle$ is branched at a free orbit of $\bar{f}$, then $G$ is reducible.
        \end{lem}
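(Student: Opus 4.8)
The plan is to deduce reducibility directly from Gilman's criterion (Theorem \ref{thm:Gilman}): $G$ is irreducible precisely when $(\Sigma_g\setminus M_G)/G$ is a thrice-punctured sphere, so it suffices to exhibit a fourth branch point of the branched covering $\Sigma_g\to\Sigma_g/G$. First I would identify the base. Since $f$ commutes with $\iota_g$, the subgroup $\langle\iota_g\rangle$ is central in $G$, and by Lemma \ref{lem:1} the element $f^{\bar n}$ lies in $\langle\iota_g\rangle$; hence $G/\langle\iota_g\rangle\cong\langle\bar f\rangle$ and $\Sigma_g/G\cong(\Sigma_g/\langle\iota_g\rangle)/\langle\bar f\rangle=T^2/\langle\bar f\rangle$. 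As $\bar f$ is conjugate to an irreducible power of $h_{n,p}$ on the torus, Proposition \ref{prop:brto} gives $T^2/\langle\bar f\rangle\cong S^2$ with exactly three cone points, one for each of the three multiple orbits of $\bar f$.

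These three cone points are genuine branch points of $\Sigma_g\to\Sigma_g/G\cong S^2$: over a point $y$ of a multiple orbit of $\bar f$, a lift $\tilde y\in p^{-1}(y)$ satisfies $f^k(\tilde y)\in\{\tilde y,\iota_g\tilde y\}$ where $k<\bar n$ is the $\bar f$-period of $y$, so $\tilde y$ has nontrivial $G$-isotropy. Now I would locate the fourth branch point. By Lemma \ref{lem:pres}(1), $f$ preserves $\opn{Fix}(\iota_g)$, so the branch locus of $p$ projects to a finite $\bar f$-invariant subset $B\subset T^2$, a union of $\bar f$-orbits. The hypothesis is exactly that $B$ meets a free orbit $O$ of $\bar f$; let $q\in S^2$ be the image of $O$. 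Then $q$ is a branch point of $\Sigma_g\to S^2$, since a point of $p^{-1}(O)$ is fixed by $\iota_g$ and thus has nontrivial $G$-stabilizer.

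The decisive point is that $q$ is distinct from the three cone points coming from $\bar f$: the latter are images of multiple orbits (nontrivial $\bar f$-isotropy), whereas $q$ is the image of the free orbit $O$ (trivial $\bar f$-isotropy), so they lie over different points of $T^2/\langle\bar f\rangle$. Consequently $\Sigma_g\to S^2$ has at least four branch points, whence $(\Sigma_g\setminus M_G)/G\cong S^2\setminus\{k\text{ points}\}$ with $k\ge 4$, which is not a thrice-punctured sphere. By Theorem \ref{thm:Gilman}, $G$ is reducible. I expect the verification that $q$ does not collide with one of the three $\bar f$-cone points to be the only subtle step, and freeness of $O$ is precisely what rules this out.

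As an alternative that matches Figure \ref{fig:simplebr}, one can instead produce the reducing system by hand: choose a simple closed curve $\mathcal C\subset S^2$ separating the at least four branch points into two groups each containing at least two of them, and pull it back along $\Sigma_g\to S^2$. Because each complementary disk carries at least two branch points, neither lifts to a union of disks, so $p^{-1}(\mathcal C)$ together with its $G$-translates forms an essential $G$-invariant multicurve, directly witnessing reducibility without invoking Gilman.
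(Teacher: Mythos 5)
Your main argument is correct, but it takes a genuinely different route from the paper. The paper's proof is a one-step explicit construction in the \emph{intermediate} quotient: since the branch locus of $p$ meets a free $\bar f$-orbit, one takes an $\bar f$-invariant simple closed curve $\mathcal{C}$ in the torus $\Sigma_g/\langle\iota_g\rangle$ bounding a disk containing that orbit (Figure \ref{fig:simplebr}); its preimage $p^{-1}(\mathcal{C})$ is an essential curve system preserved by $f$ and $\iota_g$, which exhibits reducibility directly. You instead pass to the \emph{full} quotient $\Sigma_g/G\cong T^2/\langle\bar f\rangle\cong S^2$ and count cone points: the multiple orbits of $\bar f$ contribute at least three, and the branched free orbit contributes a fourth, distinct from the others precisely because that orbit is free; hence $(\Sigma_g\setminus M_G)/G$ is a sphere with at least four punctures, and Gilman's criterion (Theorem \ref{thm:Gilman}), being an equivalence, gives reducibility. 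Your route is more systematic---no figure or ad hoc invariant curve is needed, and extra branch orbits only help---at the cost of invoking Gilman's theorem and the identification $\Sigma_g/G\cong(\Sigma_g/\langle\iota_g\rangle)/\langle\bar f\rangle$; the paper's construction is more elementary and produces the reducing multicurve explicitly. Your closing ``alternative'' is essentially the paper's proof transplanted from the torus to the sphere quotient (the paper's curve, being $\bar f$-invariant, descends to exactly such a separating curve in $S^2$), except that your essentialness claim still needs the standard innermost-disk/Euler-characteristic argument, together with the fact that the covering $\Sigma_g\to\Sigma_g/G$ is regular, so that \emph{every} preimage of a branch point is a ramification point.

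Two small points to tidy. First, the lemma only assumes $\bar f$ is a power of $h_{n,p}$, so $\bar f$ may have four multiple orbits (e.g.\ $h_{4,1}^2$ or $h_{6,3}^3$ in Proposition \ref{prop:brto}); your ``exactly three cone points'' should read ``at least three,'' which only strengthens the count, and the trivial power should be excluded or handled separately (there $\Sigma_g/G\cong T^2$, which also violates Gilman's criterion). Second, when you claim a lift $\tilde y$ of a multiple point of $\bar f$ has nontrivial $G$-isotropy, the case $f^k(\tilde y)=\iota_g(\tilde y)$ requires the remark that $\iota_g\circ f^k\neq\operatorname{id}$, which holds because $f^k=\iota_g$ would force $\bar f^k=\operatorname{id}$ while $0<k<\bar n$; this is one line but worth writing.
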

        
        \begin{proof}
            Assume that $p$ is branched at a point $y$, which is not a multiple point of $\bar{f}$.
            Let $\{y_0, y_1, \dots, y_{\bar{n}-1}\}$ be the $\bar{f}$-orbit of $y$.
            For $i=0,1, \dots, \bar{n}-1$, take an embedded arc $\gamma_i$ that connects $y_i$ and the barycenter $x$ of the $2n$-gon so that $\gamma_i \cap \gamma_j = \{x\}$ for $i\neq j$.
            Let $U$ be a simply-connected small open neighborhood of $\displaystyle\bigcup_{i=0}^{\bar{n}-1} \gamma_i$.
            Let $\mathcal{C}$ be the path that goes around the boundary of $U$ once, as shown in Figure \ref{fig:simplebr}.
            Then each component of the preimage $p^{-1}(\mathcal{C})$ is essential and preserved by $f$ and $\iota_g$, hence $G$ is reducible.
        \end{proof}
    
    Now, we will determine irreducible $f$ that induces $h_{4,1}$ on the torus $\Sigma_{g} / \langle \iota_g \rangle$.
    
    Since $\iota_g$ has $2g-2$ fixed points, $p$ is branched at $2g-2$ points.
    By Lemma \ref{lem:pres}, $\bar{f}$ maps the branch locus of $p$ to itself.
    By classifying nonempty $\bar{f}$-invariant sets with an even number of points in the union of the multiple orbits of $\bar{f}$, we obtain the following lemma.
    
        \begin{lem}
        \label{lem:4}
            If $\bar{f}=h_{4,1}$, then the branch locus $B_{p}$ of $p$ is one of the following:
                \begin{enumerate}
                    \item $\{ x, y \}$,\label{case:31}
                    \item $\{ z_1, z_2 \}$,\label{case:32}
                    \item $\{ x, y, z_1, z_2 \}$,\label{case:33}
                \end{enumerate}
            where $x, y, z_1, z_2 \in \Sigma_{g}/\langle \iota_g \rangle$ are as shown in Figure \ref{fig:112}.
        \end{lem}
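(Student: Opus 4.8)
The plan is to turn the statement into a short finite combinatorial enumeration of $\bar f$-invariant subsets of the multiple points of $h_{4,1}$, after establishing three constraints on the branched locus $B_p$. First I would record the orbit structure of $\bar f = h_{4,1}$: by Proposition \ref{prop:brto} its total valency is $[\,1,4\,;\,\tfrac14+\tfrac14+\tfrac12\,]$, so $\bar f$ has exactly three multiple orbits, namely two fixed points (the two orbits of valency $\tfrac14$), which I call $x$ and $y$, and one orbit of period $2$ (the orbit of valency $\tfrac12$), which is $\{z_1,z_2\}$ with $\bar f(z_1)=z_2$. These are precisely the four points of Figure \ref{fig:112}, and the $\bar f$-orbits contained in $\{x,y,z_1,z_2\}$ are $\{x\}$, $\{y\}$, and $\{z_1,z_2\}$.

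Next I would pin down the three properties that $B_p$ must satisfy. Since $B_p$ is the image under $p$ of $\opn{Fix}(\iota_g)$ and $f$ preserves $\opn{Fix}(\iota_g)$ by Lemma \ref{lem:pres}(1), the set $B_p$ is $\bar f$-invariant, hence a union of $\bar f$-orbits. Applying the Riemann--Hurwitz formula (Proposition \ref{prop:RH}) to the double covering $p : \Sigma_g \to \Sigma_g/\langle\iota_g\rangle \cong T^2$ shows that $\iota_g$ has $2g-2$ fixed points, so $B_p$ consists of $2g-2$ points; in particular $B_p$ is nonempty (as $g>1$) and has even cardinality (cf. Remark \ref{rem:inv}). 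Finally, because we are classifying the irreducible case, Lemma \ref{lem:freered} forbids $p$ from branching over a free orbit of $\bar f$, and therefore $B_p$ is contained in the set of multiple points $\{x,y,z_1,z_2\}$.

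It then remains to list the nonempty unions of the orbits $\{x\}$, $\{y\}$, $\{z_1,z_2\}$ having an even number of points. The singletons $\{x\}$ and $\{y\}$ and the three-point sets $\{x,z_1,z_2\}$ and $\{y,z_1,z_2\}$ are ruled out by the parity constraint, leaving exactly $\{x,y\}$, $\{z_1,z_2\}$, and $\{x,y,z_1,z_2\}$, which are the three sets listed in the statement. I do not expect a serious obstacle here: once the orbit decomposition of $h_{4,1}$ is fixed, the argument is a short finite check. The only genuinely non-combinatorial input is the reduction confining $B_p$ to the multiple orbits, where irreducibility enters through Lemma \ref{lem:freered}; the mild point worth double-checking is that it is the parity of $|B_p|$ (rather than any further geometric condition) that eliminates the odd-sized invariant sets.
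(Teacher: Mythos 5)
Your proposal is correct and follows essentially the same route as the paper: the paper's (implicit) argument before the lemma likewise combines Lemma~\ref{lem:pres} for $\bar f$-invariance of $B_p$, the count of $2g-2$ fixed points of $\iota_g$ forcing $|B_p|$ to be even and nonempty, and Lemma~\ref{lem:freered} to confine $B_p$ to the multiple orbits $\{x\}$, $\{y\}$, $\{z_1,z_2\}$ of $h_{4,1}$, after which the same finite parity check yields the three listed sets.
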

        
        \begin{prop}
        \label{prop:1}
            If $\bar{f}=h_{4,1}$, then $G$ is conjugate to either of $\langle h_{8,1} \rangle$ or $\langle h_{8,5} \rangle$.
        \end{prop}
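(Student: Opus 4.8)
The plan is to run through the three possibilities for the branch locus $B_p$ supplied by Lemma \ref{lem:4} and to show that the first two are impossible while the third produces a cyclic group whose total valency can be determined. Throughout I write $x, y, z_1, z_2$ for the points of Figure \ref{fig:112}, so that $x, y$ are the fixed points of $\bar f = h_{4,1}$ and $\{z_1, z_2\}$ is its orbit of period $2$.

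First I would show that $G$ is cyclic of order $8$ in all three cases. In each case $p$ is ramified at a multiple point of $f$ lying over a point fixed by a power of $\bar f$: over $x$ (period $1$) in cases \ref{case:31} and \ref{case:33}, and over $z_1$ (period $2$) in case \ref{case:32}. Since $\bar f$ has order $4$, Lemma \ref{lem:1} gives $n \in \{4, 8\}$, so the relevant ratio $n/k$ (with $k = 1$ or $2$) is even; hence Lemma \ref{lem:brodd} applies and yields $f^{n/2} = \iota_g$, so that $G = \langle f \rangle$ is cyclic. If $n = 4$ this would give $f^2 = \iota_g$ and $\bar f$ of order $2$, a contradiction; therefore $n = 8$, $\iota_g = f^4$, and $G = \langle f \rangle$ has order $8$.

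Next I would eliminate cases \ref{case:31} and \ref{case:32} by a parity obstruction on the unramified fibres. In case \ref{case:31} the point $z_1$ is unramified, so $p^{-1}(z_1)$ consists of two points; as $\bar f^2(z_1) = z_1$, the map $f^2$ preserves this fibre, whence $(f^2)^2 = f^4 = \iota_g$ acts on it as the identity. But $\iota_g$ is the sheet interchange of the double cover $p$ and so acts on an unramified fibre as a transposition, a contradiction. Case \ref{case:32} is identical with $x$ in place of $z_1$ (now $f$ itself preserves $p^{-1}(x)$). Hence $B_p = \{x, y, z_1, z_2\}$, $p$ is branched at $2g-2 = 4$ points, and $g = 3$.

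Finally I would compute the total valency of $f$. Since a point of $\Sigma_3$ has nontrivial $\langle f \rangle$-stabiliser only if it lies over a point fixed by a power of $\bar f$, and all four such points are now branch points of $p$ with a single preimage, the multiple orbits of $f$ are exactly the two fixed points over $x, y$ (isotropy order $8$) and the period-$2$ orbit over $z_1, z_2$ (isotropy order $4$). Thus the total valency has the form $[\,3, 8;\, \frac a8 + \frac b8 + \frac c4\,]$ with $a, b$ coprime to $8$ and $c$ coprime to $4$, and Proposition \ref{prop:Nielsenint} forces $a + b + 2c \equiv 0 \pmod 8$. A short enumeration of the resulting finitely many data shows that each coincides, via Proposition \ref{prop:val}, with the total valency of some power of $h_{8,1}$ or of $h_{8,5}$; since these powers generate $\langle h_{8,1} \rangle$ and $\langle h_{8,5} \rangle$ respectively, Theorem \ref{thm:Nielsen} gives that $\langle f \rangle$ is conjugate to $\langle h_{8,1} \rangle$ or to $\langle h_{8,5} \rangle$. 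The main obstacle is this last identification: one must determine the full orbit data of $f$ and then recognise that allowing $f$ to be an arbitrary generator of the cyclic group, rather than $h_{8,1}$ or $h_{8,5}$ on the nose, is exactly what makes the finite list of admissible total valencies collapse onto these two conjugacy classes.
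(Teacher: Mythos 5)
Your proof is correct, and its skeleton matches the paper's: cyclicity and $n=8$ via Lemmas \ref{lem:brodd} and \ref{lem:1}, a case analysis over Lemma \ref{lem:4}, and Nielsen's integrality condition to pin down the total valency in the surviving case, with the same identification of the admissible data with generators of $\langle h_{8,1}\rangle$ and $\langle h_{8,5}\rangle$. Where you genuinely diverge is in eliminating cases \ref{case:31} and \ref{case:32}. The paper disposes of these with the same valency machinery it uses for case \ref{case:33}: in case \ref{case:31} Nielsen's condition forces $(\theta_1,\theta_2)=(1,3)$ or $(5,7)$, and then $f^4$ fixes $2+4=6$ points of $\Sigma_2$, hence is hyperelliptic, contradicting $f^4=\iota_g$ whose quotient is a torus; in case \ref{case:32} the condition $\frac{\theta_1}{4}+\frac{\theta_2}{4}+\frac{\theta_3}{4}\in\mathbb{Z}$ with $\theta_i\in\{1,3\}$ simply has no solutions. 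Your sheet-interchange argument replaces both at once: an unramified $2$-point fibre over a point fixed by $\bar f$ (case \ref{case:32}) or by $\bar f^{\,2}$ (case \ref{case:31}) is preserved by $f$, respectively $f^2$, so $f^4$ acts on it trivially, whereas $f^4=\iota_g$ must act there as the nontrivial deck transformation. This is more elementary and uniform --- no valency bookkeeping and no appeal to Proposition \ref{prop:Nielsenint} in the degenerate cases --- while the paper's route keeps the whole proof in a single idiom (total valencies) and incidentally exhibits what the excluded diffeomorphism in case \ref{case:31} would have to be. One small point in your case \ref{case:33}: Proposition \ref{prop:val} computes the total valency of $h_{8,1}$ and $h_{8,5}$ themselves, not of their powers, so your ``short enumeration'' also requires the routine computation of valencies of powers (the paper does this implicitly when it lists the solutions as $(k,k,3k)$ and $(k,5k,k)$ modulo $8$); the admissible triples do collapse exactly onto the odd powers of $h_{8,1}$ and $h_{8,5}$, all of which are generators, so your conclusion stands.
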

        
        \begin{proof}
            Let $q$ be the projection $\Sigma_g \to \Sigma_g / G$.
            We can see that $G$ must be cyclic in all three cases in Lemma \ref{lem:4}.
            Indeed, since the order of $\bar{f}$ is equal to $4$, and the period of multiple points of $\bar{f}$ is either $1$ or $2$, we can see that the branching index of every branched point of $q$ is even.
            Then, by Lemma \ref{lem:brodd}, $G$ is cyclic.
            By Corollary \ref{cor:1}, we have that $n=8$. 
    
            Let us consider the case \ref{case:31} in Lemma \ref{lem:4}.
            We can see that $q$ has three branched points of index $8,8$ and $2$, respectively.
            Then the total valency of $f$ is of the form $[\,2,8\, ; \, \frac{\theta_1}{8}+ \frac{\theta_2}{8}+ \frac{1}{2} \,]$, where $\theta_1, \theta_2 \in \{ 1,3,5,7 \}$ with $\theta_1 \leq \theta_2$.
            Here $\frac{\theta_1}{8} + \frac{\theta_2}{8} + \frac{1}{2}$ is an integer by Nielsen's condition (Proposition \ref{prop:Nielsenint}).
                Thus $\theta_1 + \theta_2$ is equal to either $4$ or $12$, and hence, we have either $(\theta_1, \theta_2) = (1,3)$ or $(5,7)$.
            The involution $f^4$ fixes two fixed points of $f$.
            In addition, since the $f$-orbit of valency $\frac{1}{2}$ consists of four points, $f^4$ fixes each point in the $f$-orbit.
            Thus, having $6$ fixed points, $f^4$ is a hyperelliptic involution.
            Therefore, despite of the value of $(\theta_1,\theta_2)$, $f^4$ is a hyperelliptic involution, which contradicts Lemma \ref{lem:1}(i).
            Thus, this case does not occur.
        
            In the case \ref{case:32} of Lemma \ref{lem:4}, we can see that $q$ has three branched points of index $4$.
            Thus, the total valency of $f$ is of the form $[\,2,8\, ; \, \frac{\theta_1}{4}+ \frac{\theta_2}{4}+ \frac{\theta_3}{4} \,]$, where $\theta_{1}, \theta_{2}, \theta_{3} \in \{1,3\}$.
                Then, since $\theta_1+\theta_2+\theta_3$ is odd, there are no $\theta_{1}, \theta_{2}, \theta_{3}$ such that $\frac{\theta_1}{4}+ \frac{\theta_2}{4}+ \frac{\theta_3}{4}$ is an integer.
            By Nielsen's condition (Proposition \ref{prop:Nielsenint}), this case does not occur.
        
            Finally, let us consider the case \ref{case:33} of Lemma \ref{lem:4}.
            We can see that $q$ has three branched points of index $8,8$ and $4$, respectively.
            Let $[\,3,8\, ; \, \frac{\theta_1}{8}+ \frac{\theta_2}{8}+ \frac{\theta_3}{4} \,]$ be the total valency of $f$, where $\theta_1, \theta_2 \in \{ 1,3,5,7 \}$ with $\theta_1 \leq \theta_2$ and $\theta_3 \in \{1,3\}$. 
            Now $\frac{\theta_1}{8} + \frac{\theta_2}{8} + \frac{\theta_3}{4}$ is an integer by Nielsen's condition (Proposition \ref{prop:Nielsenint}).
            First, consider the case where $\theta_3 = 1$.
            In this case, $\theta_1 + \theta_2$ is equal to either $6$ or $14$, and hence, we have either $(\theta_1, \theta_2,\theta_3)=(1,5,1)$, $(3,3,1)$, $(7,7,1)$.
            In the case where $\theta_3 = 3$, similarly, we have either $(\theta_1, \theta_2,\theta_3) = (1,1,3)$, $(5,5,3)$ or $(3,7,3)$.
            By Proposition \ref{prop:val}, the total valency of $h_{8,1}$ and $h_{8,5}$ are $[\, 3,8 \,;\, \frac{1}{8}+\frac{1}{8}+\frac{3}{4} \,]$ and $[\, 3,8 \,;\, \frac{1}{8}+\frac{5}{8} +\frac{1}{4} \,]$, respectively.
            Thus $f$ is conjugate to either $h_{8,1}^{k}$ for some $k \in \{1,3,5,7\}$ or $h_{8,5}^{\ell}$ for some $\ell \in \{1,3\}$, which implies that $G=\langle f \rangle$ is conjugate to either $\langle h_{8,1} \rangle$ or $\langle h_{8,5} \rangle$.
        \end{proof}
    
    Let us consider the case where $\bar{f} = h_{6,3}$.
    We obtain the following lemma by an argument similar to the case where $\bar{f} = h_{4,1}$.
    
        \begin{lem}
        \label{lem:5}
            If $\bar{f}=h_{6,3}$, then branch locus $B_p$ of $p$ is one of the following:
                \begin{enumerate}
                    \item $\{ y_1, y_2 \}$,\label{case:41}
                    \item $\{ x, z_1, z_2, z_3 \}$,\label{case:42}
                    \item $\{ x, y_1, y_2, z_1, z_2, z_3 \}$,\label{case:43}
                \end{enumerate}
            where $x, y_1, y_2, z_1, z_2, z_3 \in \Sigma/ \langle \iota_g \rangle$ are as shown in Figure \ref{fig:123}.
        \end{lem}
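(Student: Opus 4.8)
The plan is to reproduce, for $\bar{f} = h_{6,3}$, exactly the reasoning that established Lemma \ref{lem:4} in the case $\bar{f}=h_{4,1}$. First I would read off the multiple-orbit structure of $h_{6,3}$ from its total valency $[\,1,6\,;\,\frac{1}{6}+\frac{1}{3}+\frac{1}{2}\,]$ recorded in Proposition \ref{prop:brto}. The three multiple orbits are: a fixed point $x$ of isotropy order $6$ (valency $\frac{1}{6}$), a period-$2$ orbit $\{y_1,y_2\}$ of isotropy order $3$ (valency $\frac{1}{3}$), and a period-$3$ orbit $\{z_1,z_2,z_3\}$ of isotropy order $2$ (valency $\frac{1}{2}$); these are the labelled points in Figure \ref{fig:123}, and every other $\bar{f}$-orbit is free (six points).

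Next I would pin down the constraints on the branched locus $B_p$. Since $\iota_g$ has $2g-2$ fixed points and $\Sigma_g/\langle\iota_g\rangle$ is a torus, $p$ is branched at the $2g-2$ points $B_p = p(\opn{Fix}(\iota_g))$, a set of even and positive cardinality because $g>1$. Using Lemma \ref{lem:pres} together with the intertwining relation $p\circ f = \bar{f}\circ p$, the set $B_p$ is $\bar{f}$-invariant. Because the goal is to classify irreducible $G$, Lemma \ref{lem:freered} lets me discard any configuration in which $B_p$ meets a free orbit of $\bar{f}$, since that would force $G$ to be reducible. Hence $B_p$ is a nonempty $\bar{f}$-invariant subset of $\{x\}\cup\{y_1,y_2\}\cup\{z_1,z_2,z_3\}$ of even cardinality.

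Finally I would carry out the enumeration. Each of the three sets above is a single $\bar{f}$-orbit, so an $\bar{f}$-invariant subset of their union is a union of some of them, giving $2^{3}-1=7$ nonempty candidates. Imposing the even-cardinality condition eliminates $\{x\}$, $\{z_1,z_2,z_3\}$, $\{x,y_1,y_2\}$, and $\{y_1,y_2,z_1,z_2,z_3\}$ (of sizes $1,3,3,5$), leaving precisely the three possibilities $\{y_1,y_2\}$, $\{x,z_1,z_2,z_3\}$, and $\{x,y_1,y_2,z_1,z_2,z_3\}$ asserted in the statement. I do not expect a genuine obstacle here: the two structural inputs (invariance of $B_p$ and the reduction of Lemma \ref{lem:freered}) are already in hand, and what remains is the parity bookkeeping. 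The only point requiring care is confirming from Figure \ref{fig:123} that $x$, $\{y_1,y_2\}$, and $\{z_1,z_2,z_3\}$ are exactly the three multiple orbits with the stated multiplicities $1$, $2$, and $3$.
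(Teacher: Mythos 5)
Your proposal is correct and takes essentially the same route as the paper: the paper obtains Lemma~\ref{lem:5} ``by an argument similar to the case $\bar{f}=h_{4,1}$,'' namely the combination you use of the branch locus having even, positive cardinality ($2g-2$ fixed points of $\iota_g$), $\bar{f}$-invariance via Lemma~\ref{lem:pres}, exclusion of free orbits via Lemma~\ref{lem:freered} in the irreducible setting, and the parity enumeration over unions of the three multiple orbits of $h_{6,3}$.
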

        
        \begin{prop}
        \label{prop:2}
            If $\bar{f}=h_{6,3}$, then $G$ is conjugate to a subgroup of either of $\langle h_{6,1}, I \rangle, \langle h_{12,2} \rangle$, or $\langle h_{12,3} \rangle$.
        \end{prop}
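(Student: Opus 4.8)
The plan is to follow the scheme of Proposition \ref{prop:1}: run through the three possible branch loci listed in Lemma \ref{lem:5}, and in each case first decide whether $G$ is cyclic and what $\opn{ord}(f)$ is, then compute the total valency of $f$. Throughout I use that the kernel of the natural map $G\to\langle\bar f\rangle$ is the deck group $\langle\iota_g\rangle$, so that $|G|=2\,\opn{ord}(\bar f)=12$. The cases \ref{case:42} and \ref{case:43} are the easy ones: both branch loci contain the $\bar f$-fixed point $x$, whose (single) preimage is a ramified multiple point of $f$ of period $k=1$, and since $n/k=n\in\{6,12\}$ is even, Lemma \ref{lem:brodd} forces $G$ to be cyclic (indeed $f^{n/2}=\iota_g$). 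Because $|G|=12$ and $\bar f$ has order $6$, Lemma \ref{lem:1} then gives $\opn{ord}(f)=12$ and $f^{6}=\iota_g$.

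The case \ref{case:41}, with $B_p=\{y_1,y_2\}$, is where the non-cyclic possibility genuinely occurs. Here I would first rule out $n=12$: if $n=12$ then $f^{6}=\iota_g$, but $x\notin B_p$, so $\iota_g$ exchanges the two points of the fibre $p^{-1}(x)$, whereas $f$ merely permutes this two-point fibre and hence $f^{6}$ acts trivially on it, a contradiction. Therefore $\opn{ord}(f)=6$ and, being generated by an order-$6$ element and an involution with $|G|=12$, $G\cong\langle f\rangle\times\langle\iota_g\rangle$ is non-cyclic.

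The computational heart is the total valency of the order-$n$ element $f$. For a ramified multiple orbit I would read off the local rotation of $\bar f$ from the total valency $h_{6,3}=[\,1,6\,;\,\tfrac{1}{6}+\tfrac{1}{3}+\tfrac{1}{2}\,]$ of Proposition \ref{prop:brto} and halve its angle through the local model $z\mapsto z^{2}$ of $p$, which fixes every ramification index $\lambda_i$; at an unramified multiple orbit the orbit type is governed by whether $f^{n/2}=\iota_g$ exchanges the two sheets, decided using commutativity of $f$ and $\iota_g$. This determines each $\lambda_i$ and leaves only a discrete ambiguity in the numerators $\theta_i$, which I would pin down via Nielsen's integrality condition (Proposition \ref{prop:Nielsenint}). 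Matching the outcome against $h_{12,3}=[\,3,12\,;\,\tfrac{1}{12}+\tfrac{3}{12}+\tfrac{8}{12}\,]$ in case \ref{case:42}, $h_{12,2}=[\,4,12\,;\,\tfrac{1}{12}+\tfrac{2}{12}+\tfrac{9}{12}\,]$ in case \ref{case:43}, and $h_{6,1}=[\,2,6\,;\,\tfrac{1}{6}+\tfrac{1}{6}+\tfrac{4}{6}\,]$ in case \ref{case:41} (or the appropriate powers), Theorem \ref{thm:Nielsen} would then give $\langle f\rangle=\langle h_{12,3}\rangle$, $\langle h_{12,2}\rangle$, $\langle h_{6,1}\rangle$ respectively.

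I expect the main obstacle to be case \ref{case:41}. There $G$ contains three involutions $\iota_g$, $f^{3}$, $f^{3}\iota_g$, all lying over $\bar f^{3}=h_{6,3}^{3}$, whose fixed locus on the torus is $\{x,z_1,z_2,z_3\}$ by Proposition \ref{prop:brto}, and all four of these points are unramified in this case. Over each of them exactly one of $f^{3}$, $f^{3}\iota_g$ fixes the two-point fibre, so their fixed-point numbers are $2a$ and $2(4-a)$, while Riemann--Hurwitz (Proposition \ref{prop:RH}) restricts any involution on $\Sigma_2$ to $2$ or $6$ fixed points, leaving $a\in\{1,3\}$. The configuration $a=3$, in which $f^{3}$ fixes the fibres over the $z_i$ and exchanges the fibre over $x$, would force $f$ to have total valency $[\,2,6\,;\,\tfrac{2}{3}+\tfrac{2}{3}+\tfrac{1}{2}+\tfrac{1}{2}\,]$, whose valency sum $\tfrac{7}{3}$ is not an integer, contradicting Proposition \ref{prop:Nielsenint}. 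Hence $a=1$, $f^{3}$ fixes $p^{-1}(x)$ (giving the two valencies $\tfrac{1}{6}$), and $I:=f^{3}\iota_g$ is the hyperelliptic involution with six fixed points over the $z_i$. Since $f^{6}=\opn{id}$ gives $\iota_g=f^{3}I$, this identifies $G=\langle f,\iota_g\rangle=\langle h_{6,1},I\rangle$ and completes the case.
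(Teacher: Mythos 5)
Your handling of cases (ii) and (iii) matches the paper's proof (Lemma \ref{lem:brodd} plus Lemma \ref{lem:1} give cyclicity and $n=12$; the branch indices $(12,4,3)$, resp.\ $(12,6,4)$, together with Nielsen's condition pin down the numerators), and your direct exclusion of $n=12$ in case (i) is a valid variant of the paper's argument. The genuine problem is exactly where you predicted: the exclusion of the configuration $a=3$ in case (i) rests on a miscomputed valency. If $f$ exchanges the two points of the \emph{unramified} fibre $p^{-1}(x)$, the isotropy of that period-$2$ orbit is generated by $f^{2}$, which near either point is the unramified lift of $\bar f^{2}$, i.e.\ a clockwise $\frac{2\pi}{3}$-rotation; hence the valency of this orbit is $\frac{1}{3}$, not $\frac{2}{3}$ (halving angles is only correct at ramified points, and $x$ is not in $B_p$ here). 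The total valency in the configuration $a=3$ is therefore $[\,2,6\,;\,\frac{1}{3}+\frac{2}{3}+\frac{1}{2}+\frac{1}{2}\,]$, whose sum is $2$, an integer, so Proposition \ref{prop:Nielsenint} does not exclude it. Nor can it be excluded: this configuration is realized by the lift $\iota_g\circ h_{6,1}$ of $h_{6,3}$, so $a=3$ genuinely occurs. The correct move — and the paper's — is not to rule it out but to normalize: since $G=\langle f,\iota_g\rangle=\langle \iota_g\circ f,\iota_g\rangle$, you may replace $f$ by $\iota_g\circ f$, which fixes $p^{-1}(x)$ pointwise, thereby reducing $a=3$ to $a=1$ without changing $G$. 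As written, your argument concludes nothing in the case $a=3$.

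A second, smaller gap is the last step of case (i): from ``$f$ is conjugate to $h_{6,1}$ and $I=f^{3}\iota_g$ is a hyperelliptic involution'' you jump to ``$G$ is conjugate to $\langle h_{6,1},I\rangle$''. What is needed is a conjugacy of the \emph{pair}: a single diffeomorphism carrying $f$ to $h_{6,1}$ and simultaneously carrying $I$ to the standard hyperelliptic involution commuting with $h_{6,1}$; conjugating $f$ alone only shows that $I$ becomes \emph{some} hyperelliptic involution in the centralizer of $h_{6,1}$. The paper invokes \cite[Lemma 13]{TN} precisely at this point, and your proposal asserts the identification without justification. (Also, a slip of wording: $\iota_g$ does not lie over $\bar f^{3}$ but over the identity; only $f^{3}$ and $f^{3}\iota_g$ lie over $\bar f^{3}$ — though your subsequent counting uses only these two, so this does not affect the argument.)
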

        
        \begin{proof}
            Let us consider the case \ref{case:42} of Lemma \ref{lem:5}.
            Since $p$ is branched at a fixed point $x$ of $\bar{f}$ and the order of $f$ is even, by Lemma \ref{lem:brodd}, $G$ is cyclic.
            Then, by Corollary \ref{cor:1}, we have $n=12$ and by the assumption on the branched locus, the branching indices of the branched points of $\bar{f}$ are $12,4$ and $3$.
            Let $[\,3,12\, ; \, \frac{\theta_1}{12}+ \frac{\theta_2}{4}+ \frac{\theta_{3}}{3} \,]$ be the total valency of $f$, where $\theta_1 \in \{ 1,5,7,11 \}, \theta_2 \in \{ 1,3 \}$ and $\theta_3 \in \{1,2 \}$. 
            By Nielsen's condition (Proposition \ref{prop:Nielsenint}), $\frac{\theta_1}{12} + \frac{\theta_2}{4} + \frac{\theta_3}{3}$ is an integer.
            By $0< \theta_1 \leq 11$, $0 < \theta_2 \leq 3$ and $0 < \theta_3 \leq 2$, we have either $\frac{\theta_1}{12} + \frac{\theta_2}{4} + \frac{\theta_3}{3} = 1$ or $2$.
            Since $\frac{\theta_2}{4} + \frac{\theta_3}{3}$ is equal to either $\frac{7}{12}$, $\frac{11}{12}$, $\frac{13}{12}$ or $\frac{17}{12}$, we can determine all $(\theta_1, \theta_2,\theta_3)$ that satisfy the condition:$(\theta_1, \theta_2,\theta_3)$ is equal to either $(5,1,1)$, $(1,1,2)$, $(11,3,1)$ or $(7,3,2)$.
        By Proposition \ref{prop:val}, the total valency of $h_{12,3}$ is $[\, 3,12 \,;\, \frac{1}{12}+\frac{1}{4}+\frac{2}{3} \,]$.
            Thus $f$ is conjugate to $h_{12,3}^{k}$ for some $k \in \{1,5,7,11\}$, which implies that $G=\langle f \rangle$ is conjugate to $\langle h_{12,3} \rangle$.
            
            Let us consider the case \ref{case:43} of Lemma \ref{lem:5}.
            Since $p$ is branched at a fixed point $x$ of $\bar{f}$ and the order of $f$ is even, by Lemma \ref{lem:brodd}, $G$ is cyclic.
            Then, by Corollary \ref{cor:1}, we have $n=12$, and by the assumption on the branched locus, the branching indices of the branched points of $\bar{f}$ are $12,6$ and $4$.
            Let $[\,3,12\, ; \, \frac{\theta_1}{12}+ \frac{\theta_2}{6}+ \frac{\theta_{3}}{4} \,]$ be the total valency of $f$, where $\theta_1 \in \{ 1,5,7,11 \}, \theta_2 \in \{ 1,5 \}$ and $\theta_3 \in \{1,3 \}$.
            By Nielsen's condition (Proposition \ref{prop:Nielsenint}), $\frac{\theta_1}{12} + \frac{\theta_2}{6} + \frac{\theta_3}{4}$ is an integer, and it is equal to either $1$ or $2$ by the same argument as the last paragraph.
            Since $\frac{\theta_2}{6} + \frac{\theta_3}{4}$ is equal to either $\frac{5}{12}$, $\frac{11}{12}$, $\frac{13}{12}$ or $\frac{19}{12}$, we can determine all $(\theta_1, \theta_2,\theta_3)$ that satisfy the condition:$(\theta_1, \theta_2,\theta_3)$ is equal to either $(7,1,1)$, $(11,5,1)$, $(1,1,3)$ or $(5,5,3)$.
            By Proposition \ref{prop:val}, the total valency of $h_{12,2}$ is $[\, 3,12 \,;\, \frac{1}{12}+\frac{1}{6}+\frac{3}{4} \,]$.
            Thus $f$ is conjugate to $h_{12,2}^{k}$ for some $k \in \{1,5,7,11\}$, which implies that $G=\langle f \rangle$ is conjugate to $\langle h_{12,2} \rangle$.
            
            Finally, let us consider the case \ref{case:41} in Lemma \ref{lem:5}.
            Since $p$ is branched at $2$ points, we have $g=2$.
            Since $\bar{f}$ fixes $x$, it follows that $p^{-1}(x)$ is either a multiple orbit of period two of $f$, or $f$ fixes $p^{-1}(x)$ pointwisely.
            In the former case, $\iota_{g} \circ f$ is a lift of $\bar{f}$ which fixes $p^{-1}(x)$ pointwisely.
            Thus, to classify the conjugacy class of $G$, we can assume that $f$ fixes $p^{-1}(x)$ pointwisely without loss of generality.
            In this case, by Wiman's result (Theorem \ref{thm:Wiman}), we have $n \leq 10$, and hence $n=6$.
            Since the valency of $\{y_{1},y_{2}\}$ as an $\bar{f}$-orbit is $\frac{1}{3}$, by a local argument, we can see that the valency of $p^{-1}(\{y_1, y_2 \})$ is either $\frac{2}{3}$ or $\frac{1}{6}$.
            But $n=6$ yields that only the former case occurs.
            Similarly, $p^{-1}(\{ z_1, z_2, z_3 \})$ is either a free orbit or the union of two multiple orbits of valency $\frac{1}{2}$.
            The Riemann-Hurwitz formula (Proposition \ref{prop:RH}) yields that only the former case occurs here.
            Indeed, if we substitute $g=2$ and $(\lambda_{1},\lambda_{2},\lambda_{3},\lambda_{4},\lambda_{5}) = (6,6,3,2,2)$ to the formula,
            then we get $g' = -\frac{1}{2}$, which contradicts to the fact that $g'$ is the genus of $\Sigma_{2}/\langle f \rangle$.
            Thus, it follows that the total valency of $f$ is $[\,2,6\,;\, \frac{1}{6}+\frac{1}{6}+\frac{2}{3} \,]$.
            Thus $f$ is conjugate to $h_{6,1}$.
            Now $f^{3} \circ \iota_{2}$ is a hyperelliptic involution which fixes all points in $p^{-1}(\{ z_1, z_2, z_3 \})$.
            By \cite[Lemma 13]{TN}, $G$ is conjugate to $\langle h_{6,1}, I \rangle$, where $I$ is a hyperelliptic involution on $\Sigma_{2}$.
        \end{proof}

    Finally, let us consider the case where $\bar{f} = h_{3,1}$.
    
    \begin{prop}
    \label{prop:3}
        If $\bar{f}=h_{3,1}$, then $G$ is conjugate to a subgroup of $\langle h_{6,1}, I \rangle$.
    \end{prop}
    
    \begin{proof}
        The multiple orbits of $h_{3,1}$ are three fixed points.
        A nonempty $\bar{f}$-invariant subset with an even number of points in the set of these three fixed points is a two-point set.
        All of them are mutually conjugate, and hence we can assume that $p$ is branched at $y$ and $z$ in Figure \ref{fig:h31}.
        Thus, the branch locus of $p$ is the same as the case \ref{case:41} in Lemma \ref{lem:5}.
        By an argument parallel to that in the final paragraph of the proof of Proposition \ref{prop:2}, we can see that $G$ is conjugate to a subgroup of $\langle h_{6,1}, I \rangle$. 
    \end{proof}

    Theorem \ref{thm:irr1} follows from Propositions \ref{prop:1}, \ref{prop:2} and \ref{prop:3}.

\section{Nonexistence of irreducible roots of $F_1$ and $F_2$}
    
    In this section, we will show the non-existence of irreducible roots of $F_1$ and $F_2$ constructed in Examples \ref{ex:redodd} and \ref{ex:redeven}.
    The argument of this section is independent of other sections.
	
	\begin{prop}
	\label{prop:redpf}
	    Neither $F_1$ nor $F_2$ is conjugate to a power of any irreducible periodic diffeomorphisms. 
	\end{prop}
	
	\begin{proof}
	    By Proposition \ref{prop:val}, the total valency of $h_{4g,2g}$ and $h_{4g,2g}^{4g-1}$ are equal to $\left[\, g, 4g \,;\, \frac{1}{4g} + \frac{2g-1}{4g} + \frac{1}{2} \,\right]$ and $\left[\, g, 4g \,;\, \frac{4g-1}{4g} + \frac{2g + 1}{4g} + \frac{1}{2} \,\right]$, respectively.
	    
	    By the construction of $F_1$ and $F_2$, the total valency of $F_1$ and $F_2$ are equal to $\left[\, 2g+1, 4g \,;\, \frac{1}{2} + \frac{1}{2} \,\right]$ and $\left[\, 2g+2, 2g+1 \,;\,\emptyset \,\right]$, respectively.
	    
	    In the case of $F_1$, by Riemann-Hurwitz formula, we have
	        \begin{align*}
	            2(2g+1) -2 &= 4g \left( 2g_1' - 2 + \frac{1}{2} + \frac{1}{2} \right),
	        \end{align*}
	    therefore, we have $g_1' = 1$, where $g_1'$ is the genus of $\Sigma_{2g+1} / \langle F_1 \rangle$.
	    Assume that there exists an irreducible periodic diffeomorphism $h$ of period $n$ such that $h^k=F_1$ for some $k \geq 2$.
            Then, we have $n = 4 \gcd(n, k)g$.
	    By Wiman's Theorem \ref{thm:Wiman}, we have $4 \gcd(n, k)g \leq 8g+6$, which implies that $\gcd(n, k)=2$.
        However, by Remark \ref{rem:1}, $h$ induces an irreducible periodic diffeomorphism of order $2$ on $T^2$.
        It contradicts Proposition \ref{prop:torusirr}.
        
        In the case of $F_2$, by Riemann-Hurwitz formula, we have
	        \begin{align*}
	            2(2g+2) -2 &= \left( 2g+1 \right) \left( 2g_2' - 2 \right),
	        \end{align*}
        therefore, we have $g_2' = 2$, where $g_2'$ is the genus of $\Sigma_{2g+2} / \langle F_2 \rangle$.
        Assume that there exists an irreducible periodic diffeomorphism $h$ of period $n$ such that $h^k=F_2$ for some $k \geq 2$.
        Then, we have $n = (2g+1)\gcd(n,k)$.
        By Wiman's Theorem \ref{thm:Wiman}, we have $(2g+1)\gcd(n,k) \leq 8g+6$, which implies that $\gcd(n,k) \leq 4$.
        However, by Remark \ref{rem:1} and the general theory of covering spaces, $h$ induces an irreducible periodic diffeomorphism of order $\gcd(n,k)$ on $\Sigma_2$.
        It contradicts Kasahara's result (Theorem \ref{thm:Kasahara}).
	\end{proof}
\appendix
\section{}
\label{app:A}

    In this appendix, we provide a more detailed explanation of $h_{2g+1,1}$ that is considered in Example \ref{ex:redeven}.
    The map $h_{4g+2,2g+1}$ is a hyperelliptic periodic diffeomorphism on $\Sigma_g$.
    Indeed, let $I = h_{4g+2,2g+1}^{2g+1}$, which is an involution that commutes with $h_{4g+2,2g+1}$.
    This $I$ fixes the $(2g+2)$-points that are the center of the $(8g+4)$-gon and the middle point of every edge of the $(8g+4)$-gon, which means that $I$ is hyperelliptic.

    \begin{lem}
    \label{lem:h2g+1}
        The diffeomorphism $h_{2g+1,1}$ is hyperelliptic and is conjugate to $h_{4g+2,2g+1}^{2g}$.
    \end{lem}

    \begin{proof}
        By Proposition \ref{prop:val}, the total valency of $h_{4g+2,2g+1}$ is equal to $\left[\, g,4g+2 \,;\, \frac{1}{4g+2}\right.$\\$\left. + \frac{g}{2g+1} + \frac{1}{2} \,\right]$.
        Since $\frac{4g+2}{\operatorname{gcd}(4g+2,4g+2)} =1$ and $1 \cdot 1 \equiv 1 \mod 4g+2$, the multiple orbit of valency $\frac{1}{4g+2}$ is a fixed point and $h_{4g+2,2g+1}$ acts on a small neighborhood of the point by $\frac{2\pi}{4g+2}$ rotation.
        Since $\frac{4g+2}{\operatorname{gcd}(2g+1,4g+2)} =2$ and $g(2g-1) \equiv 1 \mod 2g+1$, the multiple orbit of valency $\frac{g}{2g+1}$ consists of two points and $h_{4g+2,2g+1}^2$ acts on a small neighborhoods of each point by $\frac{2(2g-1)\pi}{2g+1}$ rotation.
        Since $\frac{4g+2}{\operatorname{gcd}(2,4g+2)} =2g+1$ and $1 \cdot 1 \equiv 1 \mod 2$, the multiple orbit of valency $\frac{1}{2}$ consists of $2g+1$ points and $h_{4g+2,2g+1}^{2g+1}$ acts on a small neighborhood of each points by $\frac{2\pi}{2}$ rotation.

        Thus, the $h_{4g+2,2g+1}$-orbit of the rotation angle $\frac{2\pi}{4g+2}$ is a fixed point of $h_{4g+2,2g+1}^{2g}$ of rotation angle $\frac{2g\pi}{2g+1}$, and the $h_{4g+2,2g+1}$-orbit of the rotation angle $\frac{2\pi}{2}$ is a free $h_{4g+2,2g+1}^{2g}$-orbit.
        
        On the other hand, since the $h_{4g+2,2g+1}$-orbit of the rotation angle $\frac{2(2g-1)\pi}{2g+1}$ consists of two points and $2g$ is even, the orbit splits into two fixed points of $h_{4g+2,2g+1}^{2g}$ with rotation angle $\frac{2\pi}{2g+1}$.
        Therefore, the total valency of $h_{4g+2,2g+1}^{2g}$ is equal to $\left[ \, g, 2g+1 \,;\, \frac{1}{2g+1} + \frac{1}{2g+1} + \frac{2g-1}{2g+1} \, \right]$.
        Now, by Proposition \ref{prop:val}, the total valency of $h_{2g+1,1}$ is equal to the one of $h_{4g+2,2g+1}^{2g}$, and hence, by Theorem \ref{thm:Nielsen}, we obtain the conclusion.

        Finally, by equation
        \[
            h_{2g+1,1} \circ I = h_{4g+2,2g+1}^{2g} \circ h_{4g+2,2g+1}^{2g+1} = h_{4g+2,2g+1}^{2g+1} \circ h_{4g+2,2g+1}^{2g} = I \circ h_{2g+1,1},
        \]
    the diffeomorphism $h_{2g+1,1}$ is hyperelliptic.
    \end{proof}
    
    \begin{prop}
    \label{prop:onepoint}
        The intersection of $\operatorname{Fix}(h_{2g+1,1})$ and $\operatorname{Fix}(I)$ consists of a single point.
    \end{prop}

    \begin{proof}
        By Lemma \ref{lem:h2g+1}, we can identify $h_{2g+1,1}$ with $h_{4g+2,2g+1}^{2g}$.
        Moreover, we already have $h_{4g+2,2g+1}^{2g+1}=I$.
        Therefore, $\operatorname{Fix}(h_{2g+1,1}) \cap \operatorname{Fix}(I)$ contains at least the fixed point of $h_{4g+2,2g+1}$.

        Thus, if we let the projection $\Sigma_g \to \Sigma_g/\langle h_{2g+1,1} \rangle$ be $p$, since $I$ induces an involution on $\Sigma_g / \langle h_{2g+1,1} \rangle$, which is a sphere with three cone points, $I$ exchanges two multiple orbits whose valencies are $ \frac{1}{2g+1}$ and preserves the remaining multiple orbit.
    \end{proof}

\end{document}